\documentclass[10pt]{amsart}

\usepackage{soul} 
\usepackage[dvipsnames]{xcolor}
\usepackage[bookmarks,colorlinks=true,citecolor=OliveGreen,linkcolor=RoyalBlue]{hyperref}
\usepackage{amssymb,amsthm}
\usepackage[inline]{enumitem}
\usepackage{mathrsfs}
\usepackage{mathtools}
\usepackage{xparse}
\usepackage{mathabx}
\usepackage{nicefrac}
\usepackage{thmtools}
\usepackage[capitalise]{cleveref}		
\usepackage{bm}
\usepackage{lmodern}

\usepackage{tikz}
\usetikzlibrary{angles,quotes}

\usepackage[capitalise]{cleveref}		

\makeatletter
\def\thmt@refnamewithcomma #1#2#3,#4,#5\@nil{%
  \@xa\def\csname\thmt@envname #1utorefname\endcsname{#3}%
  \ifcsname #2refname\endcsname
    \csname #2refname\expandafter\endcsname\expandafter{\thmt@envname}{#3}{#4}%
  \fi
}
\makeatother

\declaretheorem[numberwithin=section]{theorem}
\declaretheorem[sibling=theorem]{proposition}

\declaretheorem[sibling=theorem]{lemma}

\declaretheorem[sibling=theorem,style=definition]{definition}

\declaretheorem[sibling=theorem,style=remark]{remark}

\declaretheorem[sibling=theorem,style=remark]{fact}
\declaretheorem[sibling=theorem,style=remark]{notation}

\newcounter{claimCounter}[theorem]

\declaretheorem[sibling=claimCounter,style=remark]{claim}

\newcounter{subClaimCounter}[claimCounter]

\newcommand{\cat}{\widehat{\phantom{\alpha}}}
\newcommand{\uhr}{\!\upharpoonright \!}

\newcommand\seq[1]{\langle #1 \rangle}

\renewcommand\=[1]{\textup{\textrm{#1}}}

\renewcommand{\phi}{\varphi}

\title[Hyperarithmetic exceptional directions]{Hyperarithmetic directions can all be exceptional for Marstrand's projection theorem}

\author{Noam Greenberg}
\author{Daniel Turetsky}

\address{School of Mathematics and Statistics \\
Victoria University \\
PO Box 600, Wellington, 6140\\
New Zealand}

\begin{document}

\begin{abstract}
We show that there is a $\Pi^0_2$ subset~$B$ of the Euclidean plane, of Hausdorff dimension 1, such that for every line $\ell$ through the origin with hyperarithmetic direction, the projection of~$B$ onto~$\ell$ has Hausdorff dimension~0, thus is exceptional for Marstrand's projection theorem. It follows that for any computable ordinal~$\alpha$, being $\alpha$-random does not guarantee the ``almost-all'' statement of the projection theorem.
\end{abstract}

\maketitle


\section{Introduction}

A central result in geometric measure theory is Marstrand's Projection Theorem, relating the Hausdorff dimension of a set in the plane with the Hausdorff dimension of its projections onto various lines.

\begin{theorem}[Marstrand's Projection Theorem~\cite{Marstrand:projection}]\label{thm:marstrand}
Suppose $B \subseteq \mathbb{R}^2$ is an analytic set. Let $\alpha = \={dim}_H(B)$ be the Hausdorff dimension of $B$. Then:
\begin{enumerate}
\item For almost every line through the origin, the Hausdorff dimension of the projection of $B$ onto that line has Hausdorff dimension  $\min\{1, \alpha\}$; and
\item If $\alpha > 1$, then for almost every line through the origin, the projection of $B$ onto that line has positive measure in the line.
\end{enumerate}
\end{theorem}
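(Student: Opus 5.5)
The plan is the classical energy (potential-theoretic) argument, as in Kaufman's proof of Marstrand's theorem, applied to a Frostman measure. I first reduce to compact sets. Analytic sets satisfy Frostman's lemma: for every $s<\alpha$ there is a nonzero finite Borel measure $\mu$ on a compact subset of $B$ with $\mu(B(x,r))\le C r^s$ for all balls (Davies/Howroyd). A standard computation then shows that the $t$-energy
\[ I_t(\mu)=\iint |x-y|^{-t}\,d\mu(x)\,d\mu(y) \]
is finite for every $t<s$. Let $\pi_\theta(x)=x\cdot e_\theta$, where $e_\theta=(\cos\theta,\sin\theta)$. Since $\pi_\theta$ is Lipschitz, $\dim_H\pi_\theta(B)\le\min\{1,\alpha\}$ always holds, so only the lower bounds need proof.

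For part (1), take $t<\min\{1,\alpha\}$ and consider the pushforward $\mu_\theta=(\pi_\theta)_*\mu$, which is supported on $\pi_\theta(B)$. By Fubini,
\[ \int_0^{\pi} I_t(\mu_\theta)\,d\theta=\iint\int_0^\pi |(x-y)\cdot e_\theta|^{-t}\,d\theta\,d\mu(x)\,d\mu(y). \]
The key lemma is that for $t<1$ and $z\neq 0$ we have $\int_0^\pi |z\cdot e_\theta|^{-t}\,d\theta=c_t|z|^{-t}$, with $c_t=\int_0^\pi|\cos\theta|^{-t}d\theta<\infty$. The atoms of $\mu\times\mu$ on the diagonal vanish because $I_t(\mu)<\infty$. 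Hence the integral above equals $c_t I_t(\mu)<\infty$, so $I_t(\mu_\theta)<\infty$ for almost every $\theta$. Finite $t$-energy of a nonzero measure on a set forces the set to have dimension at least $t$; this is the converse direction of the mass distribution principle, obtained by restricting to the set where the potential is bounded. So $\dim_H\pi_\theta(B)\ge t$ almost surely. Taking $t$ along a countable sequence increasing to $\min\{1,\alpha\}$ and intersecting the resulting full-measure sets of directions gives (1).

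For part (2), assume $\alpha>1$ and take $s$ with $1<s<\alpha$, so that $I_1(\mu)<\infty$. I will show that $\mu_\theta$ is absolutely continuous with an $L^2$ density for almost every $\theta$, which forces $\pi_\theta(B)$ to have positive Lebesgue measure. Using Plancherel, $\widehat{\mu_\theta}(r)=\hat\mu(re_\theta)$, so
\[ \int_0^\pi\int_{\mathbb{R}}|\widehat{\mu_\theta}(r)|^2\,dr\,d\theta=\int_{\mathbb{R}^2}|\hat\mu(\xi)|^2|\xi|^{-1}\,d\xi, \]
by polar coordinates. The right-hand side is a constant multiple of $I_1(\mu)$ by the standard Fourier expression for energy, $I_t(\mu)=c\int|\hat\mu(\xi)|^2|\xi|^{t-2}d\xi$ in $\mathbb{R}^2$. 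Hence $\widehat{\mu_\theta}\in L^2$ for almost every $\theta$, and so $\mu_\theta$ has an $L^2$ density. Its support $\pi_\theta(\operatorname{supp}\mu)\subseteq\pi_\theta(B)$ therefore has positive measure. If one prefers to avoid Fourier analysis, an alternative is to bound the lower derivative $\liminf_{\delta\to0}\delta^{-1}\mu_\theta(x-\delta,x+\delta)$ in $L^1(\mu_\theta)$, averaged over $\theta$, using $\int_0^\pi\mathbf 1\{|z\cdot e_\theta|<\delta\}d\theta\lesssim \delta/|z|$. This gives $\int\!\!\int \underline{D}\mu_\theta\,d\mu_\theta\,d\theta\lesssim I_1(\mu)$, and that also yields absolute continuity.

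I expect the main obstacle to be the Frostman lemma for analytic (rather than compact or Borel) sets, since every later step is a routine Fubini computation. I would cite it as known: via capacitability, an analytic set of dimension greater than $s$ contains a compact subset of positive $\mathcal H^s$ measure. Measurability in $\theta$ of the relevant quantities is the other technical point, and it follows from lower semicontinuity of $\theta\mapsto I_t(\mu_\theta)$.
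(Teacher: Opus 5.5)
Your proof is correct. The paper does not prove this theorem. It quotes it as background, with pointers to Kaufman's proof and to the algorithmic proof of N.~Lutz and Stull. What you have written is Kaufman's energy argument for (1): a Frostman measure on a compact subset, Fubini over directions, and $\int_0^\pi |z\cdot e_\theta|^{-t}\,d\theta = c_t|z|^{-t}$ for $t<1$. For (2) it is the standard Fourier/polar-coordinates argument, which identifies $\int_0^\pi\|\widehat{\mu_\theta}\|_2^2\,d\theta$ with a constant multiple of $I_1(\mu)$.

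There are two slips of wording, neither affecting validity:
\begin{itemize}
\item The step from $I_t(\mu_\theta)<\infty$ to $\dim_H \pi_\theta(B)\ge t$ is the mass distribution principle itself, applied after restricting $\mu_\theta$ to a set where its $t$-potential is bounded. Frostman's lemma is its converse, not the other way round.
\item The remark about the diagonal is unnecessary. At $z=0$ both sides of your key identity are $+\infty$, and in any case the diagonal is $\mu\times\mu$-null because a Frostman measure has no atoms.
\end{itemize}

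The contrast with the Lutz--Stull route, which is the one in the spirit of this paper, is instructive. There the point-to-set principle reduces the statement to a pointwise lower bound on the effective dimension of $p\cdot\vec e$, for directions $\vec e$ that are sufficiently random relative to suitable oracles. This gives explicit sufficient conditions for a direction to be non-exceptional, such as 2-randomness when $B$ is $\Sigma^0_2$. It also lets one replace analyticity by equality of Hausdorff and packing dimension, though it is aimed at conclusion (1).

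Your measure-theoretic argument instead handles all analytic sets and both conclusions uniformly. A refinement of the same computation, placing a Frostman measure on the set of bad directions, even gives Kaufman's dimension bound on the exceptional set. However, the null set of exceptional directions it produces is defined through the measure $\mu$, and it says nothing about how complex its members are. That is why \cref{thm:main}, in which every hyperarithmetic direction is exceptional for a $\Pi^0_2$ set, is consistent with your argument.
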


Here we are identifying a line through the origin with a point on the unit circle~$S^1$, which we call a \emph{direction} (more properly, we should rather consider the projective line, but that detail will not matter for this paper), and from that identifying it with an angle, i.e., the amount of radians around the circle. Thus a set of lines corresponds to a set of real numbers, and so it makes sense to discuss ``almost every line''. 

One response to this theorem is to study its limits --- that is, ``playing at the edges''. This has taken a variety of forms. We will be focused on conclusion (1) from Marstrand's theorem, though many of the questions asked could be similarly asked for conclusion (2).  There have been various generalizations of Marstrand's theorem to $\mathbb{R}^n$ and linear subspaces \cite{Mattila:Grassmannians}, and notions of fractal dimension other than Hausdorff dimension (see for example \cite{Jarvanpaa:packing,FalconerHowroyd:packing} for packing dimension), but we will restrict ourselves to the version presented here (for a general survey see \cite{60years,70years}). 

The first thing one might wonder is about the requirement that $B$ be analytic. Using the Continuum Hypothesis, Davies~\cite{Davies:projection} built a set such that conclusion (1) fails. It follows that some assumption on $B$ is required. Using the stronger set theoretic assumption of $V=L$, Slaman and Stull (unpublished), and independently Richter~\cite{Richter:coanalytic}, showed that a Davies-style counterexample can be made co-analytic. It remains open whether any counterexamples of this sort can be built within ZFC.

In the other direction, N.\ Lutz and Stull~\cite{LutzStull} used algorithmic information theory to give a new proof of Marstrand's theorem, and with this showed that the assumption that $B$ is analytic can be replaced with the assumption that the Hausdorff and packing dimensions of $B$ are equal.

Another approach is to consider the set of exceptional directions, i.e., the directions $\vec{e}\in S^1$ such that the projection onto the line in direction $\vec{e}$ (passing through the origin), $\={Proj}_{\vec{e}}(B)$, has small Hausdorff dimension. Kaufman's proof of the projection theorem~\cite{Kaufman} implies that for $\beta < \={dim}_H(B)$,
\[
	\={dim}_H \{ \vec{e} : \={dim}_H(\={Proj}_{\vec{e}}(B)) \le \beta\} \le \beta.
\]
This has been improved in various ways, with an optimal result recently obtained by Ren and Wang~\cite{RenWang}, confirming a conjecture of Oberlin~\cite{Oberlin} (for a generalization to higher dimensions see~\cite{CholakEtAl}). Other results consider packing dimension and Baire category~\cite{Orponen}.

We are interested in an approach inspired by computable analysis and algorithmic randomness. There, when considering a classical result which has a conclusion that holds almost everywhere, it is common to ask what level of algorithmic randomness is necessary to ensure the conclusion. Examples of such analysis include the Lebesgue Density Theorem~\cite{DenjoyDemuthDensity,Khan:density,DayMiller:cupping,OWrandomness,Cupping:BSL,MiyabeNiesZhang}, Birkhoff's Ergodic Theorem~\cite{Vyugin:ergodic,LaurentEtAl:Birkhoff,FranklinEtAl:Birkhoff}, and the statement that a Lipschitz function is almost everywhere differentiable~\cite{BrattkaEtAl:diff}. Thus, we are interested in the question ``how random does a direction need to be to ensure that conclusion (1) holds for that direction?''

The aforementioned new proof by N.\ Lutz and Stull shows the following:

\begin{theorem}
	If $B\subseteq \mathbb{R}^2$ is $\Sigma^0_2$, then for any 2-random direction $\vec{e}$, 
	\[
		\={dim}_H(\={Proj}_{\vec{e}}(B)) = \min \left\{ \={dim}_H(B), 1   \right\}. 
	\]
\end{theorem}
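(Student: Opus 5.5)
The plan is to follow the Lutz--Stull point-to-set approach: use the point-to-set principle to reduce the statement to the effective dimensions of individual points, and then use the Lutz--Stull projection lemma, which controls the complexity of a projected point relative to a sufficiently random direction. Throughout, write $\={dim}^A(x)$ for the effective (constructive) dimension of $x$ relative to an oracle $A$.

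\textbf{Upper bound.} The inequality $\={dim}_H(\={Proj}_{\vec{e}}(B)) \le \min\{\={dim}_H(B),1\}$ is classical. Projection is Lipschitz, so it does not increase Hausdorff dimension, and the image lies in a line, which has dimension~$1$.

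\textbf{Lower bound.} Set $s = \min\{\={dim}_H(B),1\}$ and fix $\epsilon>0$. The key step is to find a single point $z\in B$ with $\={dim}^{A}(z) \ge \={dim}_H(B)-\epsilon$, where $A$ is an oracle we choose in advance. Because $B$ is $\Sigma^0_2$, say $B=\bigcup_n B_n$ with each $B_n$ a $\Pi^0_1$ class, countable stability gives some $n$ with $\={dim}_H(B_n) > \={dim}_H(B)-\epsilon$. Hausdorff dimension is a classical invariant, so a standard Frostman or mass-distribution argument (or the effective capacity argument) applied to the effectively closed set $B_n$ produces a point $z \in B_n$ with
\[
	\={dim}^{\vec{e}}(z) \ge \={dim}_H(B_n) - \epsilon .
\]

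\textbf{Projection step.} Next we apply the Lutz--Stull projection theorem for points. For $z$ and a direction $\vec{e}$ that is random relative to $z$, it gives $\={dim}^{\vec{e}}(\vec{e}\cdot z) \ge \min\{\={dim}^{\vec{e}}(z),1\}$, perhaps up to $\epsilon$. The point-to-set principle then gives
\[
	\={dim}_H(\={Proj}_{\vec{e}}(B)) \ge \={dim}_H(\={Proj}_{\vec{e}}(B_n)) \ge \min\{\={dim}_H(B_n)-\epsilon,1\}-\epsilon .
\]
One subtlety is that the point-to-set principle requires an optimal oracle for the projected set. Since $B_n$ is effectively closed and $\vec{e}$ is the parameter, $\={Proj}_{\vec{e}}(B_n)$ is $\Pi^0_1(\vec{e})$ and hence compact. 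So $\vec{e}$ itself serves as a suitable oracle, and the estimate only needs to hold for points of the form $\vec{e}\cdot z$ relative to $\vec{e}$.

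\textbf{Main obstacle.} The hard step is the randomness requirement: $\vec{e}$ must be random relative to the chosen $z$, yet $z$ was selected using $\vec{e}$. The plan is to choose $z$ so that $\vec{e}$ remains random relative to $z$. By van Lambalgen's theorem, we want $z$ sufficiently random (in dimension) relative to $\vec{e}$ inside $B_n$, so that the pair $(\vec{e},z)$ is dimension-generic. The 2-randomness of $\vec{e}$ (that is, ML-randomness relative to $\emptyset'$) should be what makes this work. It lets us take $z$ of high $\emptyset'$-relative dimension in $B_n$, for instance by taking $z$ low for $\vec{e}$ or computable from $\emptyset'$-approximations, and then the relevant Kolmogorov-complexity inequalities hold at all precisions. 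I expect the bookkeeping of these relativizations to be the delicate part.
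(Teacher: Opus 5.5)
Your outer framework is fine. The upper bound is classical, and countable stability reduces the problem to a bounded $\Pi^0_1$ piece $B_n$. Hitchcock's correspondence principle relative to $\vec e$ then makes $\vec e$ an optimal oracle for $\mathrm{Proj}_{\vec e}(B)$; the reason is that this projection is $\Sigma^0_2(\vec e)$, not merely that it is compact. (The paper does not prove this statement; it attributes it to Lutz and Stull.)

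The genuine gap is the step you defer as ``bookkeeping,'' which is the whole content of the theorem. The $z$ you actually choose has large $\dim^{\vec e}(z)$, obtained from the point-to-set principle relative to $\vec e$. That gives no independence. High $\dim^{\vec e}(z)$ is compatible with $\vec e\le_T z$, since coding $\vec e$ into sparse positions costs nothing in dimension. In that case $\vec e$ is not random relative to $z$, and the Lutz--Stull point theorem cannot be invoked.

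Van Lambalgen's theorem does not rescue this. It concerns Martin-L\"of randomness for a product measure, and $z$ is in general not random at all: if $\mathrm{dim}_H(B)<2$, $B_n$ is a null $\Pi^0_1$ class. There is no dimension analogue that converts high relative dimension into mutual randomness. Your two suggested remedies also contradict each other: if $z\le_T\emptyset'$, then $\dim^{\emptyset'}(z)=0$.

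The missing idea, and the place where 2-randomness is really used, is to choose $z$ \emph{independently} of $\vec e$, as a $\Delta^0_2$ point of high \emph{unrelativized} dimension. Your phrase about $\emptyset'$-computability points this way, but the existence of such a point is what has to be proved.

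\begin{enumerate}
\item Fix $t<\mathrm{dim}_H(B_n)$ and, working in Cantor space via $\iota$, let $P_c=\{z\in B_n : \forall m\ K(z\uhr m)\ge tm-c\}$. Since $K$ is upper semicomputable, $P_c$ is a $\Pi^0_1$ class.
\item $P_c$ is nonempty for some $c$. Otherwise, over all $m$, the fewer than $2^{tm-c}$ pairs of length-$m$ strings with $K<tm-c$ cover $B_n$. This gives $\mathcal{H}^{t'}_\infty(B_n)\le 2^{-c}/(1-2^{t-t'})$ for every $c$ and every $t'>t$, so $\mathrm{dim}_H(B_n)\le t$, a contradiction.
\item The low basis theorem yields $z\in P_c$ with $z\le_T\emptyset'$ and $\dim(z)\ge t$.
\item A 2-random $\vec e$ is Martin-L\"of random relative to $\emptyset'$, hence relative to $z$.
\item The point theorem with empty base oracle gives $\dim^{\vec e}(\vec e\cdot z)\ge\min\{\dim(z),1\}\ge\min\{t,1\}$, and the correspondence principle finishes.
\end{enumerate}

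The right-hand side of the point theorem is dimension relative to the base oracle, which must not contain $\vec e$. Writing it as $\dim^{\vec e}(z)$ is what led you to select $z$ relative to $\vec e$.

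There is also a correct version of your van Lambalgen idea. Take a low $t$-Frostman measure $\mu$ on $B_n$, choose $z$ to be $\mu$-random relative to $\mu\oplus\vec e$, and apply van Lambalgen's theorem for $\mu$ times Lebesgue measure.
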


One might conjecture the obvious extrapolation, that $\alpha$-randomness suffices when $B$ is $\Sigma^0_\alpha$. Our main result (\cref{thm:main}) implies a strong failure of this. 

\begin{theorem}
	There is a $\Pi^0_2$ set $B\subseteq \mathbb{R}^2$ such that $\={dim}_H(B)=1$, but for any computable ordinal $\alpha < \omega_1^{ck}$, there is an $\alpha$-random direction $\vec{e}$ with $\={dim}_H(\={Proj}_{\vec{e}}(B))=0$. 
\end{theorem}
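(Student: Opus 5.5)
This theorem should follow from the main result stated in the abstract: there is a $\Pi^0_2$ set $B\subseteq\mathbb{R}^2$ with $\={dim}_H(B)=1$ such that $\={dim}_H(\={Proj}_{\vec{e}}(B))=0$ for every direction $\vec{e}$ whose angle is hyperarithmetic (\cref{thm:main}). The plan is first to obtain that result, and then, for each computable ordinal $\alpha$, to exhibit a direction that is both hyperarithmetic and $\alpha$-random. Granting \cref{thm:main}, the deduction is short. Fix $\alpha<\omega_1^{ck}$. The $\alpha$-random reals (Martin-L\"of random relative to $\emptyset^{(\alpha)}$) form a $\Sigma^0_2(\emptyset^{(\alpha)})$ class of measure one; its complement is a countable union of null $\Pi^0_1(\emptyset^{(\alpha)})$ classes. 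Relativizing the standard existence of a low (indeed $\Delta^0_2$) Martin-L\"of random, for instance the leftmost path of the complement of a universal test component, gives an $\alpha$-random real computable from $\emptyset^{(\alpha+1)}$, hence hyperarithmetic. Map this real to an angle in $[0,\pi)$ computably and measure-preservingly up to scaling; the resulting direction is still $\alpha$-random and hyperarithmetic. \Cref{thm:main} then makes the projection of $B$ along it have Hausdorff dimension $0$.

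The real work is \cref{thm:main}. I would enumerate the hyperarithmetic reals as $\theta_0,\theta_1,\dots$ in some order (necessarily noneffectively) and build $B$ as a $\Pi^0_2$ set: a decreasing sequence of computable finite unions of small squares, with $B=\bigcap_s U_s$ where each $U_s$ is open and uniformly c.e. The idea is Kakeya/Davies-like. At scale $2^{-n}$, arrange the squares so that, for every direction among finitely many handled at stage $n$, they lie along few lines perpendicular to that direction. This compresses the projection onto a few tiny intervals, giving projection dimension $0$, while the total arrangement keeps enough branching to make $\={dim}_H(B)=1$ (mass distribution principle). Because the directions are not known effectively, I would use a computable approximation scheme. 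Every hyperarithmetic real is computable from some $\emptyset^{(\beta)}$ with $\beta$ computable, so I would use an effective transfinite (worker or iterated-priority) construction, or a forcing/tree argument, in which guesses at $\theta_i$ are refined. Wrong guesses should only waste dimension at levels we can afford, and $\Pi^0_2$-ness absorbs the finitely-often-changing approximations to limit behaviour.

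The main obstacle is reconciling these requirements: handling all hyperarithmetic directions with one arithmetically simple ($\Pi^0_2$) set, while keeping dimension $1$ despite infinitely many compressing requirements. Each compression of a direction costs dimension at that scale. I would try to interleave directions with sparse blocks of scales, so that each direction $\theta_i$ is compressed on a set of scales of upper density $1$, which gives lower dimension $0$ for the projection. For dimension $1$ of $B$, it should suffice that at each scale the construction is roughly "a line segment in a varying direction," with the freedom placed in which direction is used. Making the guessing of $\theta_i$, which is available only at level $\emptyset^{(\beta)}$, compatible with a $\Pi^0_2$ definition is where I expect the genuine difficulty, likely requiring a novel coding of hyperarithmetic information into the geometry.
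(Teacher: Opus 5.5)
Your deduction is exactly the paper's proof: combine \cref{thm:main} with the existence, for each computable $\alpha$, of a hyperarithmetic $\alpha$-random direction. Your relativized leftmost-path argument, which gives an $\alpha$-random real computable from $\emptyset^{(\alpha+1)}$, correctly fills in the detail the paper leaves implicit. One aside is false, though nothing depends on it: the non-$\alpha$-randoms are not a countable union of null $\Pi^0_1(\emptyset^{(\alpha)})$ classes, since that set is comeager.

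Your sketch of \cref{thm:main} is not needed for this statement, and as described it would not give a lightface $\Pi^0_2$ set, because a noneffective enumeration or limit-style guessing of the hyperarithmetic directions cannot do so. The paper instead has each point of $B$ code a jump structure on a Harrison order; these structures form a $\Pi^0_2$ class, and each computes every hyperarithmetic direction. It then takes $B$ to be the union, over all such structures, of the sets given by \cref{lem:technical_pi01}.
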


\begin{proof}
	By \cref{thm:main}, there is a  $\Pi^0_2$ set $B\subseteq \mathbb{R}^2$ such that $\={dim}_H(B)=1$, but for any hyperarithmetic $\vec{e}$, $\={dim}_H(\={Proj}_{\vec{e}}(B))=0$. For any computable $\alpha<\omega_1^{ck}$, there is a hyperarithmetic $\vec{e}$ that is $\alpha$-random. 
\end{proof}

\section{Preliminaries}

We refer the reader to \cite{Falconer:FractalGeometry,Mattila:textbook} for background on classical Hausdorff dimension.

\begin{definition}
A {\em direction} in $\mathbb{R}^2$ is a unit vector $\vec{e}$.

{\em Projecting} a set $B \subseteq \mathbb{R}^2$ onto direction $\vec{e}$ means forming the set
\[
	\={Proj}_{\vec{e}}(B) = \{ p \cdot \vec{e} : p \in B\}.
\]
\end{definition}

\begin{notation}
For a real number $\gamma$, let $\vec{u}_\gamma= (\cos \gamma\pi, \sin\gamma\pi)$.
\end{notation}

Using the above, conclusion (1) of \cref{thm:marstrand} can be rephrased as:
\begin{center}
For almost every $\gamma \in \mathbb{R}$, $\={dim}_H(\={Proj}_{\vec{u}_\gamma}(B)) = \min\{1, \={dim}_H(B)\}$.
\end{center}

We will work simultaneously in $2^\omega$ and the unit interval $[0,1]$, relying on the map $\iota: 2^\omega \to [0,1]$ to move between them, where $\iota$ is given by
\[
	\iota(x) = \sum_n x(n)\cdot 2^{-(n+1)}.
\]
Of course, $\iota$ is not exactly a homeomorphism, as it fails to be injective. The points where this failure occurs are those $x \in 2^\omega$ which are finite or cofinite, corresponding to those $\iota(x)$ which are dyadic rationals. We will restrict ourselves to the complement of these points, and so $\iota$ will be a computable injection with a uniformly computable inverse on the points we are working with.

Note that if $|\sigma| = n$, then $\iota([\sigma])$ is an interval of length $2^{-n}$.

We will also use $\iota$ to denote the induced map $2^\omega \times 2^\omega \to [0, 1] \times [0, 1]$, relying on context to indicate which map is intended.

\subsection{Jump Structures}

We refer the reader to \cite{Sacks:book,ChongYu:book} for background on the hyperarithmetic degrees.

\begin{definition}
Let $L$ be a computable linear order with a distinguished least element 0, a computable successor relation, and such that the set of limit points (nonzero points with no predecessor) is computable. For such a linear order~$L$, for any $a\in L$ which is not the greatest element of~$L$ (if it exists), we write $a+1$ for the successor of~$a$ in~$L$. 

A {\em jump structure} on $L$ is a family of sets $\overline{J} = (J_a)_{a \in L}$ satisfying:
\begin{itemize}
\item $J_0 = \emptyset$;
\item For all $a+1$, $J_{a+1} = J_a'$;
\item For each limit point $b$, $J_b = \bigoplus_{a <_L b} J_a$.
\end{itemize}
\end{definition}

\begin{fact}
Recall that a \emph{Harrison order} is a computable linear order which is ill-founded, but which has no hyperarithmetic infinite descending sequences. Such orders exist, and indeed, there is a Harrison order~$H$ on which a jump structure exists \cite{Harrison}. If $\overline{J}$ is any jump structure on a Harrison order, then for any $a$ in the ill-founded part of the order, $J_a$ computes all hyperarithmetic sets. Finally, the collection of jump structures on a given Harrison order is $\Pi^0_2$.
\end{fact}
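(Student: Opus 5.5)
The plan is to treat the three assertions of the fact separately. Existence is classical and I will simply cite it. The other two are fairly direct verifications from the definitions, plus the standard uniqueness of jump hierarchies along well-orders.

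\emph{Existence.} I would cite Harrison~\cite{Harrison}. Harrison shows that a suitable computable ill-founded linear order~$H$ with no hyperarithmetic descending sequence exists. For instance, one can take the Kleene--Brouwer order of a computable tree with paths but no hyperarithmetic path. Such an order has order type $\omega_1^{ck}(1+\eta)$ and carries a jump structure. The existence of the jump structure can be seen via the Gandy basis theorem, or via an overspill argument on the $\Sigma^1_1$ set of indices of orders supporting a jump structure. I do not intend to reprove this.

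\emph{Hyperarithmetic sets are computable from~$J_a$.} Fix $a$ in the ill-founded part of~$H$. The well-founded part of~$H$ is an initial segment of type $\omega_1^{ck}$. So for each computable ordinal~$\beta$ there is some~$b$ in the well-founded part with $\{c : c<_H b\}$ of order type at least~$\beta$, and necessarily $b<_H a$. The segment below~$b$ is a computable well-order. By transfinite induction along it (Spector's uniqueness argument), the restriction of $\overline{J}$ to this segment is a genuine $H$-set hierarchy, so $\emptyset^{(\beta)}\le_T J_b$.

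It remains to check $J_b\le_T J_a$. Follow the immediate-predecessor chain $a, a-1, a-2,\dots$. This chain is finite. Either it reaches~$b$ after finitely many steps, in which case $J_a$ is a finite iterated jump of~$J_b$. Or it first stops at a limit point $l>_H b$, or at~$0$; the latter is impossible since $b<_H a$ and each step goes to the immediate predecessor, so the chain cannot jump over~$b$. In the limit-point case $J_b$ is a column of $J_l=\bigoplus_{c<_H l}J_c$, and $J_a$ is a finite jump of~$J_l$. In either case $J_b\le_T J_a$. Since every hyperarithmetic set is computable from $\emptyset^{(\beta)}$ for some computable~$\beta$, this gives the claim. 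The main point needing care is this passage across limit points. Naive induction from~$a$ downward is unavailable because the ill-founded part has no least element, and the finite-predecessor-chain argument is what avoids it.

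\emph{$\Pi^0_2$ definability.} Code $\overline{J}$ as a single set $\bigoplus_{a\in H}J_a$. The conditions are as follows. First, $J_0=\emptyset$ is $\Pi^0_1$. Second, for each limit~$b$ and each $\langle a,n\rangle$, membership of $\langle a,n\rangle$ in $J_b$ must agree with $[a<_H b \wedge n\in J_a]$ and with $0$ otherwise; this is $\Pi^0_1$, since $<_H$ and limithood are computable. Third, the successor condition $J_{a+1}=J_a'$ says that for every~$e$,
\[
  e\in J_{a+1} \leftrightarrow \exists s\, \Phi^{J_a}_{e,s}(e)\downarrow .
\]
The forward implication is $\Sigma^0_1$ inside a universal quantifier. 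The reverse implication, $\forall s\,\neg\Phi^{J_a}_{e,s}(e)\downarrow \Rightarrow e\notin J_{a+1}$, written as $e\notin J_{a+1}\vee \exists s\ldots$, is also $\Sigma^0_1$ inside a universal quantifier. Hence this condition is $\Pi^0_2$ uniformly in~$a$. Quantifying universally over all $a\in H$ keeps the whole collection $\Pi^0_2$.
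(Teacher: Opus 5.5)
The paper gives no proof of this Fact. It is quoted as background, with existence attributed to Harrison and the other two clauses treated as standard. Your verification of those two clauses is the standard argument and is sound, but two steps need repair.

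First, you flag the finiteness of the predecessor chain $a, a-1, a-2,\dots$ as the delicate point and then only assert it. This is one of the places where the Harrison property is actually used. Because the successor relation of $H$ is computable, each predecessor can be found by unbounded search, so an infinite chain would be a computable descending sequence in $H$. Incidentally, the chain can never reach $b$: finitely many successor steps above a point of the well-founded part stay inside it. So only your limit-point case occurs, and it correctly gives $J_b\le_T J_l\le_T J_a$.

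Second, in the $\Pi^0_2$ computation, the implication you call the reverse one, $\forall s\,\neg\Phi^{J_a}_{e,s}(e)\downarrow \Rightarrow e\notin J_{a+1}$, is just the contrapositive of the forward one. So you have checked $J_{a+1}\subseteq J_a'$ twice and $J_a'\subseteq J_{a+1}$ not at all. The missing direction is $\forall s\,[\Phi^{J_a}_{e,s}(e)\downarrow \rightarrow e\in J_{a+1}]$. It is $\Pi^0_1$, so your conclusion is unaffected.

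Your side remark on existence is loose, though nothing depends on it since you, like the paper, cite Harrison. A Kleene--Brouwer order has the root as its greatest element, so its type is $\omega_1^{ck}(1+\eta)+\beta$ with $\beta\ge 1$. Also, overspill on indices of orders carrying jump structures only guarantees an ill-founded such order, not a Harrison one. Running the overspill instead over the $\Sigma^1_1$ set of $a$ in a fixed Harrison order whose initial segment below $a$ carries a jump structure does give one.
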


\subsection{Effective Hausdorff Dimension}
We refer the reader to \cite{Li.Vitanyi:93,DH:Book,Nies:Book} for background on algorithmic randomness and Kolmogorov complexity.

For each $m$ (only $m = 1$ and $m=2$ will be relevant for this paper), fix some computable listing $(q_i)_{i \in \omega}$ of $\mathbb{Q}^m$.

\begin{definition}
For any $D \in 2^\omega$, $p \in \mathbb{R}^m$ and $\delta > 0$, the {\em $D$-computable Kolmogorov complexity of $p$ with precision $\delta$} is
\[
	K_\delta^D(p) = \min\{ K^D(i) : |p - q_i| < \delta\}.
\]
The {\em $D$-computable dimension of $p$} is
\[
	\={cdim}^D(p) = \liminf_{\delta \to 0} \frac{K_\delta^D(p)}{-\log_2 \delta}.
\]
\end{definition}

Note that if $2^{-n} \le \delta < 2^{-(n+1)}$, then $K^D_{2^{-n}}(p) \le K^D_{\delta}(p)$ and $-\log_2\delta \le n+1$. Thus
\[
	\liminf_{\delta \to 0} \frac{K^D_\delta(p)}{-\log_2 \delta} \ge \liminf_{n\to\infty} \frac{K^D_{2^{-n}}(p)}{n+1} = \liminf_{n\to\infty}\frac{K^D_{2^{-n}}(p)}n = \liminf_{n\to\infty}\frac{K^D_{2^{-n}}(p)}{-\log_2 2^{-n}},
\]
so it suffices to only consider $\delta$ of the form $2^{-n}$.

Note also that for any $p \in \mathbb{R}^m$, $K^D_{2^{-n}}(p) \le^+ mn + 2\log n$. 

For points $p \in 2^\omega\times 2^\omega$, we could make a similar definition using a computable dense enumeration of points in $2^\omega\times 2^\omega$, but the definition can be considerably simplified because $p$ is uniquely expressed in binary.
\begin{definition}
For any $D \in 2^\omega$ and $p \in 2^\omega \times 2^\omega$, let $p = (x, y)$. The {\em $D$-computable dimension of $p$} is
\[
	\={cdim}^D(p) = \liminf_{n \to \infty} \frac{K^D( x\uhr n, y\uhr n)}{n}.
\]
\end{definition}
An analogous definition holds for higher powers of Cantor space, but we have stated the version we will require.

Computable dimension and Hausdorff dimension are connected through the point-to-set principle of J.\ Lutz and N.\ Lutz \cite{LutzLutz:point-to-set}, which we will state for the special cases of $2^\omega \times 2^\omega$ and $\mathbb{R}^2$

\begin{theorem}[Point-to-Set Principle]
For any $A \subseteq 2^\omega \times 2^\omega$ or $A \subseteq \mathbb{R}^2$:
\[
	\={dim}_H(A) = \min_{D \in 2^\omega} \sup_{p \in A} \={cdim}^D(p).
\]
\end{theorem}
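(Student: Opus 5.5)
The plan is to prove the two inequalities separately, and to do the work in $2^\omega\times 2^\omega$ with dyadic cylinders $[\sigma]\times[\tau]$, $|\sigma|=|\tau|=n$. For $\mathbb{R}^2$ the same argument goes through with dyadic squares of side $2^{-n}$. The needed translations are routine. Any set of diameter $d\le 2^{-n}$ is covered by at most $4$ dyadic squares of side $2^{-n}$, so covers by arbitrary sets and by dyadic squares give the same Hausdorff dimension. Also, a point lying in a dyadic square of side $2^{-n}$ is within $2^{-n}$ of the square's rational corner, and conversely, so $K^D_{2^{-n}}(p)$ and the complexity of the dyadic square of side $2^{-n}$ containing $p$ differ by $O(\log n)$. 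This does not affect the $\liminf$ of the ratio, and by the remark after the definition of $\={cdim}^D$ it suffices to look at precisions $2^{-n}$.

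\emph{Upper bound, $\={dim}_H(A)\le \sup_{p\in A}\={cdim}^D(p)$ for every $D$.} Fix $D$ and rationals $s<s'$ with $s>\sup_{p\in A}\={cdim}^D(p)$. Then every $p=(x,y)\in A$ has infinitely many $n$ with $K^D(x\uhr n,y\uhr n)<sn$. For each $N$, the cylinders $[\sigma]\times[\tau]$ with $|\sigma|=|\tau|=n\ge N$ and $K^D(\sigma,\tau)<sn$ therefore cover $A$. There are fewer than $2^{sn}$ such pairs for each $n$, so
\[
\sum_{n\ge N}2^{sn}\,(\sqrt2\, 2^{-n})^{s'}\to 0 \quad\text{as } N\to\infty .
\]
Hence $\mathcal{H}^{s'}(A)=0$ and $\={dim}_H(A)\le s'$. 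Letting $s,s'$ decrease to the supremum gives the inequality.

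\emph{Attainment, $\sup_{p\in A}\={cdim}^D(p)\le\={dim}_H(A)$ for some $D$.} Let $s=\={dim}_H(A)$ and fix a rational $t>s$. Since $\mathcal{H}^t(A)=0$, for each $k\ge 1$ there is a countable cover $\mathcal{C}_{t,k}$ of $A$ by dyadic cylinders of side at most $2^{-k}$ with $\sum_{C\in\mathcal{C}_{t,k}}\={diam}(C)^t<2^{-k}$. Let $D_t$ encode the sequence $(\mathcal{C}_{t,k})_k$. Relative to $D_t$, for every $k$ and every cylinder of side $2^{-n}$ in $\mathcal{C}_{t,k}$, issue a Kraft–Chaitin request of length $\lceil tn\rceil+O(1)$ for its pair of strings. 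The total weight is at most $\sum_k 2^{-k}\le 1$ up to a constant, so the KC theorem relativized to $D_t$ applies. It gives $K^{D_t}(x\uhr n,y\uhr n)\le tn+O(1)$ whenever $p=(x,y)$ lies in a cylinder of side $2^{-n}$ belonging to some $\mathcal{C}_{t,k}$. Every $p\in A$ lies in such cylinders for every $k$, and their sides $2^{-n}\le 2^{-k}$ shrink, so this happens for infinitely many $n$ and $\={cdim}^{D_t}(p)\le t$.

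Now take $D=\bigoplus_{t\in\mathbb{Q},\,t>s}D_t$. Since $K^D\le^+ K^{D_t}$ (the additive constant depends on $t$ but not on $n$), we get $\={cdim}^D(p)\le t$ for every rational $t>s$ and every $p\in A$. So $\sup_{p\in A}\={cdim}^D(p)\le s$. Combined with the upper bound this is an equality, and the minimum in the statement is attained at this $D$.

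The main obstacle is this attainment step. The point is to set up the KC requests so that the weights over all $k$ sum to a finite amount, which is why the $k$-th cover is chosen with $t$-mass below $2^{-k}$. One also has to require the cylinder sides to go to $0$ along $k$, since that is what makes the low-complexity prefixes occur at infinitely many $n$, and the $\liminf$ needs exactly that. The $\mathbb{R}^2$ bookkeeping is routine but must be checked.
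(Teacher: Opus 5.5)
The paper does not prove this statement. It quotes it from J.~Lutz and N.~Lutz \cite{LutzLutz:point-to-set}, where it is derived via gales: the gale characterization of Hausdorff dimension, relativized to an oracle computing the gale, is combined with Mayordomo's Kolmogorov-complexity characterization of constructive dimension and a transfer from binary expansions to $\mathbb{R}^n$.

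Your argument is correct and takes a different, self-contained route.
\begin{itemize}
\item The inequality $\dim_H(A)\le\sup_{p\in A}\mathrm{cdim}^D(p)$ is the direct counting argument. By prefix-freeness there are fewer than $2^{sn}$ pairs of complexity below $sn$ at each length, and these give $\mathcal{H}^{s'}$-null covers.
\item Attainment comes from coding covers of $t$-mass below $2^{-k}$ and mesh below $2^{-k}$ into $D_t$ and turning them into Kraft--Chaitin requests. Summability over $k$ makes the requests legal, and the shrinking mesh gives infinitely many good $n$. Joining over rational $t>\dim_H(A)$ then finishes the proof.
\end{itemize}

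Your route avoids gales and Mayordomo's theorem altogether. It also exhibits the witnessing oracle concretely, as a code for near-optimal covers. The gale route is shorter once that machinery is in place.

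There are two harmless slips in the $\mathbb{R}^2$ bookkeeping:
\begin{itemize}
\item A point of a dyadic square of side $2^{-n}$ is within $\sqrt{2}\cdot 2^{-n}$ of the corner, not within $2^{-n}$. Use the center instead, which is within $2^{-n}/\sqrt{2}$.
\item For the upper bound in $\mathbb{R}^2$ you can skip the translation to squares entirely. Cover $A$ directly by the balls $B(q_i,2^{-n})$ with $K^D(i)<sn$ and $n\ge N$.
\end{itemize}
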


We will refer to those oracles $D$ where this minimum is achieved as oracles {\em witnessing} the Hausdorff dimension of $A$.

\subsection{Two Applications of Point-to-Set}
To illustrate the technique, we prove two folklore results using the Point-to-Set Principle.

\begin{proposition}\label{prop:iota_preserves_dimension}
For any $A \subseteq 2^\omega\times 2^\omega$, $\={dim}_H(A) = \={dim}_H(\iota(A))$ and with the same witnessing oracles.
\end{proposition}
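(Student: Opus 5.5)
We will use the Point-to-Set Principle in both spaces. It then suffices to show that for every oracle $D$ and every $p=(x,y)\in A$,
\[
	\={cdim}^D(p) = \={cdim}^D(\iota(p)),
\]
possibly after adjusting $D$ so that the computations of $\iota$ and its inverse are available. Taking the supremum over $p\in A$ and then the minimum over $D$ gives equality of the Hausdorff dimensions. Moreover, any $D$ achieving the minimum on one side achieves it on the other, because for each fixed $D$ the two suprema agree. One subtlety is that $A$ may contain points with dyadic coordinates, where $\iota$ fails to be injective. We will check that the comparison does not actually require invertibility at those points.

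The first inequality is $\={cdim}^D(\iota(p))\le \={cdim}^D(p)$. From $x\uhr n, y\uhr n$ we can compute the dyadic rational point $q=(\iota(x\uhr n 0^\omega),\iota(y\uhr n 0^\omega))$. This point is within $\sqrt2\cdot 2^{-n}$ of $\iota(p)$, so it is within $2^{-(n-1)}$. Finding the index $i$ of $q$ in the fixed listing of $\mathbb{Q}^2$ is a computable search. Hence
\[
	K^D_{2^{-(n-1)}}(\iota(p)) \le^+ K^D(x\uhr n, y\uhr n),
\]
and dividing by $n$ and taking the liminf gives the inequality. The shift by one in the precision does not affect the liminf.

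The reverse inequality $\={cdim}^D(p)\le\={cdim}^D(\iota(p))$ is the main obstacle, because a rational approximation of $\iota(p)$ does not determine the binary prefixes: near dyadic boundaries the expansions can differ. We handle this by a finite-ambiguity argument. Suppose $|q_i-\iota(p)|<2^{-n}$. Then each coordinate of $\iota(p)$ lies in an interval of length $2^{1-n}$ centred at the corresponding coordinate of $q_i$. Such an interval meets at most three dyadic intervals $\iota([\sigma])$ with $|\sigma|=n$. So given $i$ and $n$, the pair $(x\uhr n,y\uhr n)$ is one of at most $9$ candidates, which we can list computably. Specifying which candidate costs $O(1)$ bits, and specifying $n$ costs $O(\log n)$ bits. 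Therefore
\[
	K^D(x\uhr n,y\uhr n)\le K^D_{2^{-n}}(\iota(p)) + O(\log n).
\]
For a dyadic $p$ the prefixes $x\uhr n$ are still among the candidates, so the bound holds for every point, not just the non-dyadic ones. After dividing by $n$, the $O(\log n)$ term vanishes in the liminf. Combined with the earlier remark that it suffices to take $\delta=2^{-n}$, this gives the inequality.

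Both directions hold for every oracle $D$ with the same constants, uniformly in $p$, and need no extra oracle information. So the identity $\sup_{p\in A}\={cdim}^D(p)=\sup_{q\in\iota(A)}\={cdim}^D(q)$ holds for each $D$. Applying the Point-to-Set Principle on both sides completes the proof, including the claim about witnessing oracles.
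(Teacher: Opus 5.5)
Your proof is correct and follows essentially the same route as the paper. You reduce via the Point-to-Set Principle to $\={cdim}^D(p)=\={cdim}^D(\iota(p))$ for each fixed $D$, get one inequality from the computable dyadic approximation $(0.\sigma,0.\tau)$, and get the other from a computably listable, bounded set of candidate prefixes at cost $K^D(n)+O(1)$. Your bookkeeping is slightly more careful than the paper's, though this only affects additive constants: the precision $2^{-(n-1)}$ respects the strict inequality in the definition of $K^D_\delta$, and your bound of $9$ candidates is the accurate one, since the ball of radius $2^{-n}$ about $q_i$ can meet a $3\times 3$ block of dyadic squares, whereas the paper says $4$.
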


\begin{proof}
By the Point-to-Set Principle, it suffices to show that for any $D \in 2^\omega$ and $p \in 2^\omega \times 2^\omega$, $\={cdim}^D(p) = \={cdim}^D(\iota(p))$. So fix $p$ and $D$, and let $p = (x, y)$. 

Note that the map $j: 2^{<\omega}\times 2^{<\omega} \to \mathbb{Q}^2$ given by
\[
	j(\sigma, \tau) = \left( 0.\sigma,0.\tau  \right)
\]
(where $0.\sigma = \sum_{n < |\sigma|} \sigma(n)\cdot 2^{-(n+1)}$)
is computable. Further, $|\iota(p) - j(x\uhr n, y\uhr n)| \le \sqrt{2} \cdot 2^{-n}$. Thus
\[
	K^D_{\sqrt{2}\cdot 2^{-n}}(\iota(p)) \le^+ K^D(x\uhr n, y\uhr n),
\]
where the additive constant is for the instruction to locate the $i$ with $q_i = j(x\uhr n, y\uhr n)$. It follows that
\begin{align*}
\={cdim}^D(\iota(p)) &= \liminf_{\delta \to 0} \frac{K^D_\delta(\iota(p))}{-\log_2\delta}\\
 &\le \liminf_{n \to \infty} \frac{K^D_{\sqrt{2}\cdot 2^{-n}}(\iota(p))}{-\log_2 (\sqrt 2\cdot 2^{-n})}\\
&= \liminf_{n \to \infty} \frac{K^D_{\sqrt{2}\cdot 2^{-n}}(\iota(p))}{n - 1/2}\\
& \le \liminf_{n \to \infty} \frac{K^D(x\uhr n, y\uhr n)}{n - 1/2}\\
&= \liminf_{n \to \infty} \frac{K^D(x\uhr n, y\uhr n)}{n} = \={cdim}^D(p).
\end{align*}

Conversely, suppose $|\iota(p) - q_i| < 2^{-n}$. Then $q_i$ is contained in some dyadic square of sidelength $2^{-n}$, but because $q_i$ might be near or on the boundary, there can be as many as four pairs $(\sigma, \tau) \in 2^n \times 2^n$ with $p \in [\sigma]\times[\tau]$. Given $i$ and $n$, we can uniformly obtain those four pairs. Thus
\[
	K^D(x\uhr n, y\uhr n) \le^+ K^D_{2^{-n}}(\iota(p)) + K^D(n) + 2.
\]
Here the 2 bits are necessary to indicate which of the four pairs is $(x\uhr n, y\uhr n)$.

Since $K^D(n) \le^+ 2\log n$, it follows that
\begin{align*}
\={cdim}^D(p) &= \liminf_{n \to \infty} \frac{K^D(x\uhr n, y\uhr n)}n\\
&\le \liminf_{n\to\infty} \frac{K^D_{2^{-n}}(\iota(p)) + K(n) + 2}n\\
&= \liminf_{n\to\infty} \frac{K^D_{2^{-n}}(\iota(p))}n\\
&= \={cdim}^D(\iota(p)).\qedhere
\end{align*}
\end{proof}

\begin{proposition}\label{prop:projection_loses_atmost_1}
For any set $B \subseteq \mathbb{R}^2$ and any direction $\vec{e}$,
\[
	\={dim}_H(B) \le \={dim}_H(\={Proj}_{\vec{e}}(B)) + 1.
\]
\end{proposition}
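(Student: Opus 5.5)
We prove this with the Point-to-Set Principle, in the same style as \cref{prop:iota_preserves_dimension}. Fix $B$ and $\vec{e}$, and let $D$ be an oracle witnessing the Hausdorff dimension of $\={Proj}_{\vec{e}}(B)$. Thus $\sup_{t \in \={Proj}_{\vec{e}}(B)} \={cdim}^D(t) = \={dim}_H(\={Proj}_{\vec{e}}(B))$. Let $E = D \oplus \vec{e}$, with $\vec{e}$ coded as a real. Relativizing only increases the available information, so $\={cdim}^E(t) \le \={cdim}^D(t)$ for every $t$. Since $\={dim}_H(B) \le \sup_{p\in B}\={cdim}^E(p)$, it suffices to prove
\[
	\={cdim}^E(p) \le \={cdim}^E(p\cdot\vec{e}) + 1 \quad\text{for every } p \in B.
\]

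The key step is a pointwise complexity bound. Let $\vec{e}^\perp$ be the perpendicular direction. Then $p = (p\cdot\vec{e})\,\vec{e} + (p\cdot \vec{e}^\perp)\,\vec{e}^\perp$, and $E$ computes $\vec{e}$ and $\vec{e}^\perp$. We encode $p$ at precision $2^{-n}$ as follows:
\begin{enumerate*}[label=(\roman*)]
\item an index $i$ with $|p\cdot\vec{e} - q_i| < 2^{-n}$ of complexity $K^E_{2^{-n}}(p\cdot\vec{e})$;
\item a rational $r$ within $2^{-n}$ of $p\cdot\vec{e}^\perp$, costing at most $n + O(\log n)$ bits, since we may assume $p$ lies in a bounded region (or absorb the integer part into the logarithmic term);
\item the number $n$, costing $O(\log n)$.
\end{enumerate*}
From these, with oracle $E$, we compute $q_i\vec{e} + r\vec{e}^\perp$ to within $2^{-n}$. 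This is a point within $3\cdot 2^{-n}$ of $p$, and we then search for a rational $q_j \in \mathbb{Q}^2$ near it. This gives
\[
	K^E_{4\cdot 2^{-n}}(p) \le K^E_{2^{-n}}(p\cdot\vec{e}) + n + O(\log n).
\]
The oracle $E$ only approximates $\vec{e}$, so the computation has to use a sufficiently good approximation of $\vec{e}$, one with error below $2^{-n}$ times a bound on $|q_i|,|r|$. That approximation is available from $E$ given $n$.

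For the conclusion, divide by $n$ and take $\liminf$. The constant factor $4$ in the precision and the $O(\log n)$ terms vanish in the limit, as in the computation in \cref{prop:iota_preserves_dimension}. The term $n/n$ contributes exactly $1$. Since $\liminf(a_n + 1) = \liminf a_n + 1$, we get $\={cdim}^E(p) \le \={cdim}^E(p\cdot\vec{e}) + 1$. Taking the supremum over $p\in B$ and applying the Point-to-Set Principle on both sides gives the proposition.

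The main obstacle is bookkeeping rather than ideas. First, we must handle unboundedness of $B$. One approach is to reduce to $B\cap[-N,N]^2$ using countable stability of Hausdorff dimension; the other is to note that the integer part of $p\cdot\vec{e}^\perp$ is a fixed constant for each $p$, so it costs only $O(1)$. Second, we must ensure that the oracle switch from $D$ to $E$ goes in the right direction: raising the oracle lowers complexity on the left while keeping the right side bounded by the witnessed dimension.
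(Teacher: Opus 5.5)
Your proof is correct and is essentially the paper's argument. Both use the oracle $D\oplus\vec{e}$ with $D$ witnessing the dimension of the projection, decompose $p=(p\cdot\vec{e})\vec{e}+(p\cdot\vec{e}^\perp)\vec{e}^\perp$, pay $K^D_{2^{-n}}(p\cdot\vec{e})$ for the first component and $n+O(\log n)$ for the second, and divide by $n$ before taking the $\liminf$. Your explicit inclusion of $n$ in the description makes precise a step the paper's computation leaves implicit, at a harmless $O(\log n)$ cost: it lets $q_i\vec{e}+r\vec{e}^\perp$ be rounded to a rational point using a sufficiently good approximation of $\vec{e}$.
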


\begin{proof}
Fix $B$ and $\vec{e}$, and fix $D$ an oracle witnessing the Hausdorff dimension of $\={Proj}_{\vec{e}}(B)$.  Let $\alpha = \={dim}_H(\={Proj}_{\vec{e}}(B))$. We will show that for every $p \in B$, $\={cdim}^{D\oplus \vec{e}}(p) \le \alpha+1$, and so the result will follow by the Point-to-Set Principle.  So fix $p \in B$.

Let $\vec{e}^\perp$ be one of the two directions perpendicular to $\vec{e}$, noting that $\vec{e}^\perp$ is computable from $\vec{e}$.

By assumption, $\={cdim}^D(p\cdot \vec{e}) \le \alpha$. Note that if $q_i$ is within $2^{-n}$ of $p\cdot \vec{e}$, and $q_j$ is within $2^{-n}$ of $p\cdot \vec{e}^\perp$, then $q_i \vec{e} + q_j \vec{e}^\perp$ is within $\sqrt{2}\cdot 2^{-n}$ of $p$. Further, $q_i\vec{e} + q_j\vec{e}^\perp$ is uniformly computable from $i$ and $j$, with oracle $\vec{e}$. It follows that
\begin{align*}
	\={cdim}^{D\oplus \vec{e}}(p) &\le \liminf_{n\to\infty} \frac{K^{D\oplus \vec{e}}_{\sqrt{2}\cdot 2^{-n}}(p)}{-\log(\sqrt{2}\cdot 2^{-n})}\\
	&\le \liminf_{n\to\infty} \frac{K^D_{2^{-(n+1)}}(p\cdot \vec{e}) + K_{2^{-(n+1)}}(p\cdot \vec{e}^\perp)}{n-1/2}\\
	&\le \liminf_{n\to\infty} \frac{K^D_{2^{-(n+1)}}(p\cdot \vec{e}) + (n+1)  + 2\log(n+1)}{n+1}\\
	&= \liminf_{n\to\infty} \frac{K^D_{2^{-(n+1)}}(p\cdot \vec{e})}{n+1} + 1\\
	&= \={cdim}^D(p\cdot \vec{e}) + 1 \le \alpha+1.\qedhere
\end{align*}
\end{proof}

\section{A perverse set}

\begin{theorem}\label{thm:main}
There is a $\Pi^0_2$ class $B \subseteq [0, 1]\times [0, 1]$ with Hausdorff dimension 1, such that for every hyperarithmetic direction $\vec{e}$, $\={Proj}_{\vec{e}}(B)$ has Hausdorff dimension 0.
\end{theorem}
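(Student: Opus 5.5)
The plan is to build $B$ as a set of points that \emph{encode their own jump structure}. Fix a Harrison order $H$ that carries a jump structure, together with an element $a^*$ in its ill-founded part. By the Fact, $J_{a^*}$ computes every hyperarithmetic set. Consider the list of partial $J_{a^*}$-computable directions $\vec{e}_k=\Phi_k^{J_{a^*}}$, arranged so that each index $k$ recurs infinitely often, say via a computable sequence $k(s)$ that hits each $k$ infinitely often. Fix a very fast-growing computable sequence of scales $n_0<n_1<\cdots$, for instance with $n_{s+1}\ge 2^{n_s}$. A point $p\in B$ will be a pair $(x,y)\in 2^\omega\times 2^\omega$, which we then push through $\iota$. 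Its digits are of two kinds:
\begin{itemize}
\item \emph{coding digits}, at a sparse computable set of positions of density $0$, which spell out some $\overline{J}$;
\item \emph{free digits}, carrying one real degree of freedom per block of scales $(n_s,n_{s+1}]$.
\end{itemize}
In block $s$, the point is constrained so that its position at precision $2^{-n_{s+1}}$ is obtained from its position at precision $2^{-n_s}$ by moving along the direction $\vec{e}_{k(s)}^{\perp}$. Here $\vec e_{k(s)}$ is computed from the $\overline{J}$ decoded from $p$, to precision $2^{-n_{s+1}}$, within the block's time bound; if that computation does not converge in time, we default to a fixed direction.

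Membership in $B$ is then defined by two conditions:
\begin{itemize}
\item the decoded $\overline{J}$ is a jump structure on $H$, which is a $\Pi^0_2$ condition by the Fact;
\item each block satisfies its geometric constraint (up to a fixed tolerance such as $2^{-n_{s+1}+2}$), which is $\Pi^0_1$ in $p$.
\end{itemize}
Together these make $B$ a $\Pi^0_2$ class. The time-bounded defaulting keeps the second condition $\Pi^0_1$; since $J_{a^*}$ is total, every true direction is eventually computed correctly at all sufficiently late blocks devoted to its index, so only finitely many blocks per direction use the default.

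\textbf{Upper bound on projections.} Let $\vec e$ be hyperarithmetic. Then $\vec e=\Phi_k^{J_{a^*}}$ for some $k$, for every jump structure $\overline J$ on $H$, because $J_{a^*}$ is determined uniformly. To get the witnessing oracle for the Point-to-Set Principle, take $D=\vec e$. For $p\in B$ and a block $s$ with $k(s)=k$ (and $s$ large), the motion in that block is perpendicular to $\vec e$. Hence $p\cdot\vec e$ at precision $2^{-n_{s+1}}$ is determined, up to $O(1)$ candidates, by $p$ at precision $2^{-n_s}$. This gives
\[
K^{D}_{2^{-n_{s+1}}}(p\cdot\vec e)\le^+ 2n_s+O(\log n_{s+1}).
\]
The coding digits cost only a lower-order amount, since they lie at density-$0$ positions and the coded part up to $n_{s+1}$ has length $o(n_{s+1})$. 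Dividing by $n_{s+1}$ and using $n_{s+1}\gg n_s$ shows that the $\liminf$ is $0$. So $\={cdim}^D(p\cdot\vec e)=0$ for all $p\in B$, and $\={dim}_H(\={Proj}_{\vec e}(B))=0$ by the Point-to-Set Principle.

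\textbf{Dimension of $B$.} Since hyperarithmetic directions exist, \cref{prop:projection_loses_atmost_1} applied to such a direction gives $\={dim}_H(B)\le 0+1=1$. For the lower bound, fix any oracle $D$. Choose the free digits of some $p\in B$ (with any fixed genuine jump structure coded in) to be Martin-L\"of random relative to $D\oplus\overline J$. Then at every scale $n$ the free digits contribute about $n-o(n)$ bits of complexity. Hence $\={cdim}^D(p)\ge 1$, and $\={dim}_H(B)\ge1$. Throughout, \cref{prop:iota_preserves_dimension} lets us pass between $2^\omega\times2^\omega$ and $[0,1]^2$. We avoid dyadic rationals; the points in question are random in the free coordinate, so this is harmless.

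\textbf{Expected main obstacle.} The hard part is making the geometric constraint ``move along $\vec e^\perp$ in block $s$'' compatible with the digit-level coding. Two issues arise. First, the coding digits perturb the point, and they must not spoil the projection bound; I would place them at positions $\gg$ the block's working precision, or make them $O(2^{-n_{s+1}})$ corrections absorbed into the tolerance. Second, motion along a line of irrational slope must be expressed as a closed, $\Pi^0_1$ condition on binary digits that still leaves a full real degree of freedom. For this I would parametrize the block by a free real $t_s$ and require
\[
|p - (p_{s}+t_s\vec e_{k(s)}^\perp)|\le 2^{-n_{s+1}+c},
\]
where $p_s$ denotes the point's approximation at precision $2^{-n_s}$. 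I would then check that the $D$-complexity of $p$ at scale $n_{s+1}$ still grows like $n_{s+1}$ when $t_s$ is random.
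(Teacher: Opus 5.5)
Your architecture is the paper's: points code their own jump structure, directions are enumerated from $\overline J$, and each block pins the point near a line perpendicular to that block's direction at rapidly growing scales. Point-to-Set handles the projections, and \cref{prop:projection_loses_atmost_1} gives $\={dim}_H(B)\le 1$.

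\textbf{The lower bound.} The genuine gap is $\={dim}_H(B)\ge 1$, which is the heart of \cref{lem:technical_pi01} and which you defer as the ``main obstacle''. Choosing random free digits presupposes that the constraint leaves one coordinate's worth of freedom in every block, and as you formulate it, it need not.

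Read $p_s$ as the binary truncation of $p$, i.e.\ the lower-left corner of its dyadic square $Q$ of side $2^{-n_s}$. If $\vec e^{\perp}_{k(s)}$ has negative slope, the line $p_s+t\,\vec e^{\perp}_{k(s)}$ meets $Q$ only at $p_s$. Your condition then traps $p$ in an $O(2^{-n_{s+1}})$-neighbourhood of that corner. Such directions (e.g.\ $\vec u_{1/4}$) occupy infinitely many blocks. So every $p=(x,y)\in B$ has $K(x\uhr n_{s+1},y\uhr n_{s+1})\le 2n_s+O(\log n_s)$ for infinitely many $s$, whence $\={dim}_H(B)=0$.

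Taking $p_s$ to be the centre of $Q$ avoids this particular failure. You would still have to check compatibility with the previous block's tolerance strip and with wherever you place the coding digits, and that is exactly what you leave open.

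The paper settles this in \cref{clm:full_axis_choices}. Instead of a line through a fixed reference point, it allows any line of the grid $\ell_r$, $r\in 2^{-((n+1)!+3)}\mathbb{Z}$. Their vertical spacing is less than half the height of the current rectangle. It then shows, for steep $\vec u_q$, that every extension $\sigma'$ has a $\tau'$ with the whole square $\iota([\sigma']\times[\tau'])$ within $2^{-((n+2)!-2)}$ of a single $\ell_r$. Consequently one coordinate's digits are genuinely free in each block, and the coding bit at position $(n+1)!$ precedes the constrained digits. Symmetry of information then yields $K^D(x\uhr i,y\uhr i)\ge i-O(n_i)$ at every scale $i$.

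\textbf{The direction list.} A smaller problem: totality of $\Phi_k^{J_{a^*}}$ does not make it reach precision $2^{-n_{s+1}}$ within a fixed computable time bound at the blocks devoted to $k$. Its running time is only $J_{a^*}$-computable, so your justification that each direction defaults at only finitely many blocks is wrong as stated.

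The paper avoids time bounds altogether. It reads off $J_{a+2}$ whether $\Phi_i(J_a)$ is total and a direction. That makes $\bar e(\overline J)$ a total $\overline J$-computable sequence, so the block constraints are $\Pi^0_1$ in $p$. Imposing each block's constraint only conditionally on convergence would also work.

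Finally, $J_{a^*}$ is not the same for all jump structures on $H$; otherwise it would be hyperarithmetic. This is harmless, since your witness $D=\vec e$ does not depend on $\overline J$.
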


\begin{remark}
	In the language of \cite{FiedlerStull}, \cref{thm:main} says that the collection of hyperarithmetic directions is not unniversal for the class of $\Pi^0_2$ sets. 
\end{remark}

The majority of the proof is contained in the following lemma.

\begin{lemma}\label{lem:technical_pi01}
Given a $Z \in 2^\omega$ and $\bar{e} = (\vec{e}_s)_{s \in \omega}$ a sequence of directions in $\mathbb{R}^2$, there is $A \subset 2^\omega \times 2^\omega$ such that:
\begin{enumerate}
\item $A$ is uniformly $\Pi^0_1(Z, \bar{e})$;
\item For all $(x, y) \in A$, neither $x$ nor $y$ is finite or cofinite (thus $\iota$ is nice on $A$);
\item For all $(x, y) \in A$, for all $n$, $Z(n) = x((n+1)!) \ominus y((n+1)!)$;
\item $A$ has Hausdorff dimension at least 1; and
\item For every $s \in \omega$, $\={Proj}_{\vec{e}_s}(\iota(A))$ has Hausdorff dimension 0, with witnessing oracle $\emptyset$.
\end{enumerate}
\end{lemma}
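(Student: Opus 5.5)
The plan is to build $A$ as the set of paths through a $(Z\oplus\bar e)$-computable tree of pairs of dyadic squares, refined in blocks of very rapidly increasing length. Each block is devoted to one direction $\vec{e}_s$, and each $s$ is served by infinitely many blocks. Fix a computable sequence $n_0<n_1<\cdots$ with $n_k/n_{k+1}\to 0$, say $n_{k+1}=2^{n_k}$, and a computable map $k\mapsto s(k)$ hitting every $s$ infinitely often. At level $n_k$, a node is a pair $(\sigma,\tau)\in 2^{n_k}\times 2^{n_k}$, i.e.\ a closed dyadic square $Q$ of side $2^{-n_k}$.

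To extend a node through block $k$ with $s=s(k)$, first choose a dyadic rational $r$ with about $n_k+O(1)$ bits lying in the middle of the interval $\={Proj}_{\vec{e}_s}(Q)$. Then allow as extensions exactly those squares $Q'\subseteq Q$ of side $2^{-n_{k+1}}$ whose closure lies within $c\cdot 2^{-n_{k+1}}$ of the line $\ell=\{p: p\cdot\vec{e}_s=r\}$, for a fixed constant $c$. The line $\ell$ runs in direction $\vec{e}_s^{\perp}$ through the middle of $Q$, so it crosses $Q$ in a segment of length at least about $2^{-n_k}$. Hence there are roughly $2^{n_{k+1}-n_k}$ admissible $Q'$, and the same holds proportionally at every intermediate level $n_k<m\le n_{k+1}$.

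The test ``the closure of $Q'$ is within $c2^{-n_{k+1}}$ of $\ell$'' is a closed condition, hence co-c.e.\ in $\vec{e}_s$. Also, $r$ can be chosen from a fixed rational approximation to $\vec{e}_s$ with enough slack that the choice is $\bar e$-computable. So the tree is $\Pi^0_1(Z,\bar e)$ uniformly, which gives (1).

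For (2) and (3), I reserve a sparse set of levels: the factorials $(n+1)!$, together with, say, one further level in each interval between consecutive factorials. At level $(n+1)!$ I require $x((n+1)!)\ominus y((n+1)!)=Z(n)$. At the further levels I force one $0$ bit and one $1$ bit in each coordinate, which rules out finite and cofinite sequences. These constraints remove only a bounded fraction of the candidates at each affected level, and the slack constant $c$ should ensure that some admissible children still meet the line. This step is where I expect the main obstacle. One has to check that the line constraint and the forced bits never make a node dead, especially when a forced level lies just after a block boundary $n_k$ where the segment is short relative to the square. I would handle it by fixing $c$ large, placing the forced levels away from the $n_k$ (possible since factorials are sparse and the $n_k$ grow faster), and showing that every node has at least $2^{(m'-m)-O(\#\text{forced levels in }(m,m'])}$ extensions from level $m$ to level $m'$.

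For (4), I use the Point-to-Set Principle together with \cref{prop:iota_preserves_dimension}: given any oracle $D$, pick a path by choosing, at each level, the extension according to the bits of a $D\oplus Z\oplus\bar e$-Martin-Löf random sequence. Using the extension count above, the resulting point $p=(x,y)$ has $K^D(x\uhr m,y\uhr m)\ge m-O(n_k)-o(m)$ for $n_k<m\le n_{k+1}$. Since $n_k=o(m)$ except when $m$ is within $o(m)$ of $n_k$, and there the prefix already carries about $n_k$ random bits from the previous block, we get $\={cdim}^D(p)\ge 1$. Hence $\={dim}_H(A)\ge 1$.

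For (5), fix $s$ and any $p\in\iota(A)$. For each of the infinitely many $k$ with $s(k)=s$, the point $p$ lies within $(c+\sqrt2)2^{-n_{k+1}}$ of the line $\ell$ chosen at level $n_k$. So $|p\cdot\vec{e}_s-r|\le C2^{-n_{k+1}}$, where $r$ is a dyadic rational describable in $O(n_k)$ bits without any oracle. Therefore $K_{C2^{-n_{k+1}}}(p\cdot\vec{e}_s)\le O(n_k)=o(n_{k+1})$, so $\={cdim}^{\emptyset}(p\cdot\vec{e}_s)=0$. By the Point-to-Set Principle, $\={Proj}_{\vec{e}_s}(\iota(A))$ has Hausdorff dimension $0$ with witnessing oracle $\emptyset$. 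The key design choice here is the rational offset $r$: it makes the projection approximable without knowing $\vec{e}_s$.
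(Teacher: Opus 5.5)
\textbf{The step you flag as the main obstacle is a genuine gap, and the resolution you sketch does not work.} Your arguments for (1) and (5) are fine and essentially match the paper's: a dyadic $r$ of $O(n_k)$ bits lying within $C2^{-n_{k+1}}$ of $p\cdot\vec e_s$.

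\emph{The extension bound is false.} You claim every node has at least $2^{(m'-m)-O(f)}$ extensions, with $f$ the number of forced levels in between. This fails even with no forced bits. Inside block $k$ the line $\ell$ is fixed, and an intermediate node whose square meets the $c2^{-n_{k+1}}$-neighbourhood of $\ell$ only near a corner has boundedly many admissible descendants at level $n_{k+1}$.

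\emph{Forced bits inside a block do real damage.}
If $\vec e_s=(1,0)$, then $\ell=\{x=r\}$ with $r$ dyadic of $n_k+O(1)$ bits. Every admissible point then has all its $x$-bits equal on the positions between about $n_k$ and $n_{k+1}-\log_2 c$. So two of your extra levels in one block forcing a $0$ and a $1$ in $x$ kill every node. This happens with one extra level per factorial interval, since $(n_k,2^{n_k}]$ contains many such intervals.

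For a generic slope, suppose $\ell$ passes (nearly) through the centre of the current square $S$ at a factorial level $m$, with negative slope, and the required XOR is $0$. The allowed children (bottom-left and top-right) only touch $\ell$ at that centre. So $S$ has boundedly many descendants at level $n_{k+1}$, and a path through $S$ loses about $n_{k+1}-m$ bits. No choice of the constant $c$ helps. ``Placing the forced levels away from the $n_k$'' puts them exactly where this occurs.

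\emph{The final estimate is also too weak.} $K^D(x\uhr m,y\uhr m)\ge m-O(n_k)-o(m)$ would not give dimension $1$. At $m=2n_k$, $n_k$ is not $o(m)$, and $m$ is not within $o(m)$ of $n_k$. Neither that bound nor the $n_k$ bits inherited from the previous block gives a ratio tending to $1$.

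\textbf{How the paper avoids this.} It aligns the blocks with the factorials, $((n+1)!,(n+2)!]$. The only forced bit is at position $(n+1)!$, taken as a single deterministic child at the very start of each block, before any line constraint is imposed. The mod-$4$ scheme makes that one bit code $Z(n)$ and also gives each coordinate infinitely many $0$s and $1$s, so nothing is forced inside a block.

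It then proves the full-axis property, Claim~\ref{clm:full_axis_choices}: when the lines have slope in $[-1,1]$, every column $[\sigma']$ of a node contains an admissible square. The paper uses the node-independent grid $\ell_r$, $r=m2^{-((n+1)!+3)}$, spaced more finely than the node; your single line through the centre would also do.

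For (4), one coordinate is extended by a string $\rho$ random relative to $D$ and $(\hat\sigma,\hat\tau,K^D(\hat\sigma,\hat\tau))$, and the other by its partner. Symmetry of information then gives $K^D(x\uhr i,y\uhr i)\ge i-O(n_i)$, with $n_i$ the block index, so no counting of extensions is needed. Since $(n+1)!/(n+2)!\to 0$, your proof of (5) still goes through.
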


Here $\ominus$ denotes symmetric difference, i.e., exclusive-or. The uniformity in~(1) means that there is a $\Pi^0_1$ set $W\subseteq (2^\omega)^2\times 2^\omega \times (S^1)^\omega$ such that for any $(Z,(\vec{e}_s))\in  2^\omega \times (S^1)^\omega$, the set $A = A(Z,(\vec{e}_s))$ is simply the section $\left\{ p \,:\,  (p,Z,(\vec{e}_s))\in W \right\}$. 

\begin{proof}[Proof of \cref{thm:main}, assuming \cref{lem:technical_pi01}]
Fix $H$, a computable Harrison order for which a jump structure exists.

Note that for any $X \in 2^\omega$, the statement ``$\Phi_i(X)$ is total and a direction'' is uniformly $\Pi^0_2$ in $i$ and $X$ (here $(\Phi_i)$ is an effective list of all Turing functionals). Thus, given $\overline{J}$, a jump structure on $H$, we can uniformly enumerate all directions computable from any $J_a$: for each $i$ and $a$, check the appropriate bit of $J_{a+2}$ to determine whether $\Phi_i(J_a)$ is total and a direction; if so compute $\Phi_i(J_a)$ and add it to the sequence. We let $\bar{e}(\overline{J})$ denote the sequence of directions obtained in this way from $\overline{J}$.

Given $Z \in 2^\omega$ and $\bar{e}$ a sequence of directions, let $A(Z, \bar{e})$ denote the set given by \cref{lem:technical_pi01}. We are ready to define our set $B$.
\[
	B = \bigcup \{ \iota\circ A(\overline{J}, \bar{e}(\overline{J})) : \text{$\overline{J}$ a jump structure on $H$}\}.
\]
As written, this might not appear $\Pi^0_2$. We give the following reformulation:
\begin{align*}
	p \in B \iff &\text{neither coordinate of $p$ is dyadic, and}\\
	& \text{for $(x, y) = \iota^{-1}(p)$ and $\overline{J} = \{ n \mapsto x((n+1)!)\ominus y((n+1)!)\}$}:\\
	&\hskip 1pc \bigl(\text{$\overline{J}$ is a jump structure on $H$}\bigr) \wedge \bigl((x,y) \in A(\overline{J}, \bar{e}(\overline{J}))\bigr)
\end{align*}
This is a conjunction of $\Pi^0_2$ and $\Pi^0_1$ statements.

\begin{claim}
For every hyperarithmetic direction $\vec{e}$, $\={Proj}_{\vec{e}}(B)$ has Hausdorff dimension 0.
\end{claim}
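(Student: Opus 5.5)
The plan is to use the Point-to-Set Principle with the fixed oracle $\emptyset$. The key observation is that \cref{lem:technical_pi01}(5) gives the \emph{same} witnessing oracle $\emptyset$ for every jump structure $\overline{J}$. This matters because $B$ may be a union of uncountably many pieces $\iota\circ A(\overline{J},\bar{e}(\overline{J}))$, and a union of uncountably many dimension-$0$ sets need not have dimension $0$; a uniform witnessing oracle avoids this problem.

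Fix a hyperarithmetic direction $\vec{e}$. Since projection commutes with unions,
\[
	\={Proj}_{\vec{e}}(B) = \bigcup_{\overline{J}} \={Proj}_{\vec{e}}\bigl(\iota\circ A(\overline{J},\bar{e}(\overline{J}))\bigr),
\]
where $\overline{J}$ ranges over jump structures on $H$. By the Point-to-Set Principle, it suffices to show that every point $q$ of this union has $\={cdim}^{\emptyset}(q)=0$. Taking $D=\emptyset$ then gives $\={dim}_H(\={Proj}_{\vec{e}}(B)) \le \sup_q \={cdim}^\emptyset(q) = 0$.

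So fix a jump structure $\overline{J}$ on $H$ and a point $q$ in the projection of its piece. The first step is to check that $\vec{e}$ occurs in the sequence $\bar{e}(\overline{J})$.
\begin{itemize}
\item Since $H$ is ill-founded, pick any $a$ in the ill-founded part.
\item By the Fact, $J_a$ computes every hyperarithmetic set, so $\vec{e}=\Phi_i(J_a)$ for some $i$ with $\Phi_i(J_a)$ total and a direction.
\item Because the property ``$\Phi_i(J_a)$ is total and a direction'' is $\Pi^0_2$ in $J_a$, it is decided correctly by $J_{a+2}$. The construction of $\bar{e}(\overline{J})$ therefore adds $\Phi_i(J_a)=\vec{e}$ to the sequence.
\end{itemize}
Hence $\vec{e}=\vec{e}_s$ for some $s$.

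The second step applies \cref{lem:technical_pi01}(5) to this $s$: the set $\={Proj}_{\vec{e}}(\iota(A(\overline{J},\bar{e}(\overline{J}))))$ has Hausdorff dimension $0$ with witnessing oracle $\emptyset$. By the definition of a witnessing oracle, $\sup$ of $\={cdim}^\emptyset$ over this projection equals $0$, so in particular $\={cdim}^\emptyset(q)=0$. As $\overline{J}$ and $q$ were arbitrary, this completes the argument.

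The only delicate point is the uniformity of the witnessing oracle. It is already supplied by the lemma, so I expect no real obstacle beyond verifying that $\vec{e}$ really does appear in $\bar{e}(\overline{J})$ for every jump structure $\overline{J}$ on $H$, which the Fact about Harrison orders provides.
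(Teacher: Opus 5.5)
Your proof is correct and matches the paper's argument. You show that $\vec{e}$ occurs in $\bar{e}(\overline{J})$ for every jump structure $\overline{J}$ on $H$, because $J_a$ computes $\vec{e}$ for any $a$ in the ill-founded part, and then use the uniform witnessing oracle $\emptyset$ from \cref{lem:technical_pi01}(5) to get $\mathrm{cdim}(p\cdot\vec{e})=0$ for every $p\in B$ before concluding by the Point-to-Set Principle; your remark on why the oracle's uniformity matters is the point the paper flags with ``note the absence of an oracle.''
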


\begin{proof}
Fix such a $\vec{e}$. As $\vec{e}$ is hyperarithmetic, $\vec{e}$ is computable from $J_a$ for every jump structure $\overline{J}$ and any $a$ in the non-standard part of $H$. Thus $\vec{e}$ occurs in $\bar{e}(\overline{J})$ for any $\overline{J}$ a jump structure on $H$.

Fix $p \in B$, and fix $\overline{J}$ with $p \in \iota\circ A(\overline{J}, \bar{e}(\overline{J}))$. By \cref{lem:technical_pi01}, $\={cdim}(p\cdot \vec{e}) = 0$ (note the absence of an oracle). Since $p$ is arbitrary, it follows by the point-to-set principle that $\={Proj}_{\vec{e}}(B)$ has Hausdorff dimension 0.
\end{proof}

\begin{claim}
$B$ has Hausdorff dimension 1.
\end{claim}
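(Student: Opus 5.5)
We need to show $\dim_H(B)\ge 1$ and $\dim_H(B)\le 1$; the two directions use different results from earlier in the paper.

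\emph{Lower bound.} Since $H$ carries a jump structure, fix one jump structure $\overline{J}$ on $H$, and set $A = A(\overline{J},\bar{e}(\overline{J}))$ as given by \cref{lem:technical_pi01}. By the definition of $B$ we have $\iota(A)\subseteq B$. By item (4) of the lemma, $\dim_H(A)\ge 1$, and \cref{prop:iota_preserves_dimension} transfers this to $\dim_H(\iota(A))\ge 1$. Hausdorff dimension is monotone, so $\dim_H(B)\ge 1$. This is the step where existence of a jump structure on $H$ is used.

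\emph{Upper bound.} Pick any direction $\vec{e}$ that is hyperarithmetic, for instance a computable one such as $\vec{u}_0=(1,0)$. By the previous claim, $\dim_H(\={Proj}_{\vec{e}}(B))=0$. \Cref{prop:projection_loses_atmost_1} then gives
\[
\dim_H(B)\le \dim_H(\={Proj}_{\vec{e}}(B))+1 = 1 .
\]

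Combining the two bounds, $\dim_H(B)=1$. Nothing in this argument is delicate, since the real content sits in \cref{lem:technical_pi01}. The only points to check are that $\iota$ is well behaved on $A$, which is item (2) of the lemma and is needed to apply \cref{prop:iota_preserves_dimension}, and that the chosen direction really is hyperarithmetic, so that the previous claim applies.
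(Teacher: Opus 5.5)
Your proof is correct and follows the same argument as the paper. The lower bound comes from a single jump structure via item (4) of \cref{lem:technical_pi01} and \cref{prop:iota_preserves_dimension}, and the upper bound comes from \cref{prop:projection_loses_atmost_1} applied to a hyperarithmetic direction together with the previous claim. One minor point: \cref{prop:iota_preserves_dimension} is stated and proved for arbitrary $A \subseteq 2^\omega\times 2^\omega$, so item (2) of the lemma is not actually needed for that step.
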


\begin{proof}
Fix $\overline{J}$ some jump structure on $H$. Then $\iota\circ A(\overline{J}, \bar{e}(\overline{J})) \subseteq B$ has Hausdorff dimension at least 1, by \cref{lem:technical_pi01} and \cref{prop:iota_preserves_dimension}. It follows that $B$ has Hausdorff dimension at least 1. However, the Hausdorff dimension of $B$ can be no more than 1 by \cref{prop:projection_loses_atmost_1} (consider any hyperarithmetic direction). So $B$ has Hausdorff dimension 1.
\end{proof}
This completes the proof.
\end{proof}

It remains to prove \cref{lem:technical_pi01}.

\begin{proof}[Proof of \cref{lem:technical_pi01}]
We construct sets $U_n, V_n \subset 2^{<\omega} \times 2^{<\omega}$, for $n \in \omega$, such that:
\begin{itemize}
\item For each $(\sigma, \tau) \in U_n$, $|\sigma| = |\tau| = (n+1)!$;
\item For each $(\sigma, \tau) \in V_n$, $|\sigma| = |\tau| = (n+1)!+1$;
\item Every element of $U_{n+1}$ extends (coordinatewise) some element of $V_n$; and
\item Every element of $V_{n}$ extends (coordinatewise) some element of $U_{n}$.
\end{itemize}
Then $A$ will be defined as
\begin{align*}
	A &= \{ (x, y) \in 2^\omega \times 2^\omega : \forall n\ [(x\uhr (n+1)!, y\uhr (n+1)!) \in U_n]\}\\
	&= \{ (x, y) \in 2^\omega \times 2^\omega : \forall n\ [(x\uhr ((n+1)!+1), y\uhr ((n+1)!+1)) \in V_n]\}.
\end{align*}

We define now the $U_n$ and $V_n$.  We begin with $U_0 = 2^1 \times 2^1$.

Given $U_n$, our definition of $V_n$ depends on the value $n$:
\begin{itemize}
\item[($n$ is 0 mod 4)] Fix $b\in \{0,1\}$ such that $0\ominus b = Z(n)$. Define $V_n = \{ (\sigma 0, \tau b) : (\sigma, \tau) \in U_n\}$.
\item[($n$ is 1 mod 4)] Fix $b\in \{0,1\}$ such that $1\ominus b = Z(n)$. Define $V_n = \{ (\sigma 1, \tau b) : (\sigma, \tau) \in U_n\}$.
\item[($n$ is 2 mod 4)] Fix $b\in \{0,1\}$ such that $b\ominus 0 = Z(n)$. Define $V_n = \{ (\sigma b, \tau 0) : (\sigma, \tau) \in U_n\}$.
\item[($n$ is 3 mod 4)] Fix $b\in \{0,1\}$ such that $b\ominus 1 = Z(n)$. Define $V_n = \{ (\sigma b, \tau 1) : (\sigma, \tau) \in U_n\}$.
\end{itemize}
The only reason for this more complicated coding is that it simultaneously handles item (2) of \cref{lem:technical_pi01}.

\medskip

Given $V_n$, let $n = \seq{s, k}$, where $\seq{-,-}$ is a standard pairing function. 
We are interested in dot products of the form $p\cdot \vec{e}$, where $p$ is in the unit square and $\vec{e}$ is on the unit circle. These are effectively compact, and so by effective uniform continuity, we can effectively (relative to $\bar{e}$) find a $q \in \mathbb{Q}$ such that $d(p\cdot \vec{u}_q, p\cdot \vec{e}_s) < 2^{-(n+2)!}$ for all $p$ in the unit square. Fix such a $q$.

We wish $U_{n+1}$ to be a set of pairs $(\sigma, \tau)$ of the appropriate length, each coordinatewise extending some element of $V_n$, such that:
\begin{itemize}
\item If there is a rational of the form $r = m2^{-((n+1)!+3)}$, for $m \in \mathbb{Z}$, with
\[
	\={Proj}_{\vec{u}_q}(\iota([\sigma]\times[\tau])) \subseteq B(r, 2^{-((n+2)!-2)}),
\]
then $(\sigma, \tau) \in U_{n+1}$; and
\item If $(\sigma, \tau) \in U_{n+1}$, then there is a rational of the form $r = m2^{-((n+1)!+3)}$, for $m \in \mathbb{Z}$, with
\[
	\={Proj}_{\vec{u}_q}(\iota([\sigma]\times[\tau])) \subseteq B(r, 2^{-((n+2)!-3)}).
\]
\end{itemize}
Here $B(r, \delta)$ denotes the open ball centered at $r$ of radius $\delta$.

\begin{claim}
There is a choice of $U_{n+1}$ satisfying the above and uniformly computable from $V_n$ and $q$.
\end{claim}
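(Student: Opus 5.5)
The plan is to define $U_{n+1}$ by an explicit, decidable test on each candidate pair, using rational arithmetic only. The test is chosen to sit between the two radii $2^{-((n+2)!-2)}$ and $2^{-((n+2)!-3)}$, so the gap absorbs the error from approximating $\cos$ and $\sin$.

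First, enumerate the finitely many candidates. These are the pairs $(\sigma,\tau)$ with $|\sigma|=|\tau|=(n+2)!$ that coordinatewise extend some element of $V_n$; there are finitely many, and they are listed uniformly from the canonical index of the finite set $V_n$.

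For such a pair, $\iota([\sigma]\times[\tau])$ is the closed dyadic square $Q=[a,a+h]\times[c,c+h]$ with $h=2^{-(n+2)!}$ and $a,c$ dyadic rationals. Since $p\mapsto p\cdot\vec u_q$ is linear, $\={Proj}_{\vec u_q}(Q)$ is the closed interval $I=[\min_v v\cdot\vec u_q,\max_v v\cdot \vec u_q]$, with $v$ ranging over the four corners. Its endpoints are computable reals uniformly in $q$ and the pair, because $\cos(q\pi)$ and $\sin(q\pi)$ are uniformly computable from the rational $q$.

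The test is as follows. Let $R$ be the finite set of rationals $r=m2^{-((n+1)!+3)}$ with $|r|\le 2$; only these can matter, since $I\subseteq[-2,2]$. Compute rational approximations $\ell^*,u^*$ to the endpoints $\ell,u$ of $I$ within $\epsilon=2^{-((n+2)!+2)}$. Put $(\sigma,\tau)$ into $U_{n+1}$ iff there is $r\in R$ with $\max(|\ell^*-r|,|u^*-r|)<\rho$, where $\rho=\tfrac32\cdot 2^{-((n+2)!-2)}$ lies strictly between the two radii. Everything here is a finite comparison of rationals, so $U_{n+1}$ is uniformly computable from $V_n$ and $q$.

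Now check the two requirements. If $I\subseteq B(r,2^{-((n+2)!-2)})$, then since $I$ is closed and $B$ is open, both endpoints lie at distance $<2^{-((n+2)!-2)}$ from $r$. Hence the approximations lie at distance $<2^{-((n+2)!-2)}+\epsilon<\rho$, so the pair is included.

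Conversely, suppose the pair is included, witnessed by $r$. Then both endpoints lie within $\rho+\epsilon<2\cdot 2^{-((n+2)!-2)}=2^{-((n+2)!-3)}$ of $r$. By convexity, $I\subseteq B(r,2^{-((n+2)!-3)})$.

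The one delicate point is that membership in an open ball is only semi-decidable for computable reals, so "there exists $r$ with $I\subseteq B(r,\cdot)$" cannot be decided exactly. The slack between the two radii is exactly what lets a finite-precision rational test replace it. One also has to check that the approximation error $\epsilon$ plus the choice of $\rho$ fit inside that slack; they do, since $\epsilon$ is a quarter of $2^{-(n+2)!}$ while the gap is $2^{-((n+2)!-2)}$.
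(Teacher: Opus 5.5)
Your proof is correct and takes essentially the paper's approach: reduce to the corners (equivalently, the endpoints of the projected interval) by convexity, then use the slack between the radii $2^{-((n+2)!-2)}$ and $2^{-((n+2)!-3)}$ to replace the undecidable ball-membership test with a decidable one. The difference is only in implementation: the paper races the two semi-decidable conditions $d<2^{-((n+2)!-3)}$ and $d>2^{-((n+2)!-2)}$ for each corner, whereas you compute the endpoints to the fixed precision $2^{-((n+2)!+2)}$ and compare them with the intermediate threshold $\frac{3}{2}\cdot 2^{-((n+2)!-2)}$.
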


\begin{proof}
We consider only $(\sigma, \tau)$ of the appropriate length and extending some element of $V_n$.

First note that $\iota([\sigma]\times[\tau])$ is contained in the unit square, and so every element $p$ has norm at most $\sqrt{2}$. Since $\vec{u}_q$ is a unit vector, $p\cdot \vec{u}_q$ has norm at most $\sqrt{2}$. Thus we can put a bound on the norm of a possible $r$, e.g., $|r| \le 2$. This means that there are only finitely many $m$ to check for any given $(\sigma, \tau)$.

Note that by convexity, $\={Proj}_{\vec{u}_q}(\iota([\sigma]\times[\tau])) \subseteq B(r, \delta)$ if and only if $p\cdot \vec{u}_q \in B(r, \delta)$ for each $p$ a corner of the square $\iota([\sigma]\times[\tau])$. Note also that for $p = (x, y)$, $p\cdot \vec{u}_q = x\cos q\pi + y\sin q\pi$.

So fixing a corner $p$ and an $r$ as above, we compute $d = |x\cos q\pi + y\sin q\pi - r|$ until we see either:
\begin{enumerate}
\item $d < 2^{-((n+2)!-3)}$; or
\item $d > 2^{-((n+2)!-2)}$.
\end{enumerate}
Observe that we will always eventually see at least one of these hold.

We define $U_{n+1}$ to be the set of $(\sigma, \tau)$ of the appropriate length and extending an element of $V_n$ for which there is an $r$ such that for all four corners of $\iota([\sigma]\times[\tau])$, we see (1) hold before we see (2) hold. Recall that there are only finitely many $r$ to consider, and so $U_{n+1}$ is computable as desired.

If $\={Proj}_{\vec{u}_q}(\iota([\sigma]\times[\tau])) \subseteq B(r, 2^{-((n+2)!-2)})$, for an appropriate $r$, then for this $r$ and all four corners $p$, we will have $d < 2^{-((n+2)!-2)}$; thus we will never see $(2)$ hold for one of the corners, and so we put $(\sigma, \tau)$ into $U_{n+1}$ when we eventually see $(1)$ hold for all four corners.

If $\={Proj}_{\vec{u}_q}(\iota([\sigma]\times[\tau])) \not \subseteq B(r, 2^{-((n+2)!-3)})$ for any appropriate $r$, then for every $r$ there is a corner $p$ with $d \ge2^{-((n+2)!-3)}$; thus we will never see $(1)$ hold for this corner, so we will reject $(\sigma, \tau)$ when we eventually see $(2)$ hold.

Thus $U_{n+1}$ is as desired.
\end{proof}

This completes the construction, and observe that items (1), (2) and (3) of \cref{lem:technical_pi01} are immediate. It remains only to show the appropriate Hausdorff dimensions.

\begin{claim}
For every $s \in \omega$, $\={Proj}_{\vec{e}_s}(\iota(A))$ has Hausdorff dimension 0 with witnessing oracle $\emptyset$.
\end{claim}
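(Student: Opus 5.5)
By the Point-to-Set Principle with oracle $D=\emptyset$, it suffices to show that for every $p\in\iota(A)$ we have $\={cdim}(p\cdot\vec{e}_s)=0$. In other words, $p\cdot\vec{e}_s$ must be approximable to precision roughly $2^{-(n+2)!}$ by a rational of unrelativized Kolmogorov complexity $O((n+1)!)$, for infinitely many $n$. Since $(n+1)!/(n+2)! = 1/(n+2)\to 0$, this gives liminf $0$.

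Fix $s$ and $p\in\iota(A)$. Consider the infinitely many $n$ with $n=\seq{s,k}$ for some $k$, and let $q$ be the rational chosen at stage $n$. Let $(x,y)=\iota^{-1}(p)$, and set $\sigma=x\uhr (n+2)!$, $\tau=y\uhr (n+2)!$. Since $(x,y)\in A$, we have $(\sigma,\tau)\in U_{n+1}$. By the second bullet of the definition of $U_{n+1}$, there is an $m$ with $r=m2^{-((n+1)!+3)}$ and
\[
|p\cdot\vec{u}_q - r| < 2^{-((n+2)!-3)}.
\]
By the choice of $q$, $|p\cdot\vec{e}_s - p\cdot\vec{u}_q|<2^{-(n+2)!}$. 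Hence $|p\cdot\vec{e}_s - r| < 9\cdot 2^{-(n+2)!} < 2^{-((n+2)!-4)}$.

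Next I bound $K(r)$ without any oracle; this is why the construction rounds to dyadic rationals $r$ rather than using $q$ or $\vec{e}_s$ directly. Since $|r|\le 2$, we have $|m|\le 2^{(n+1)!+4}$. So $r$ is described by $(n+1)!$ (itself given by $n$) together with $m$, and
\[
K(r)\le^+ (n+1)!+O(\log n)
\]
(or at worst $2(n+1)!+O(\log n)$ with a crude self-delimiting code). Taking $\delta_n = 2^{-((n+2)!-4)}$, we get
\[
\frac{K_{\delta_n}(p\cdot\vec{e}_s)}{-\log_2\delta_n}\le\frac{O((n+1)!)}{(n+2)!-4}\to 0
\]
along this subsequence of $n$. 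Therefore $\={cdim}(p\cdot\vec{e}_s)=0$ for every $p\in\iota(A)$, and the Point-to-Set Principle gives $\={dim}_H(\={Proj}_{\vec{e}_s}(\iota(A)))=0$, witnessed by $\emptyset$.

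The only real point of care is making sure the description of $r$ uses no oracle. Neither $q$ nor $\bar e$ is needed: the index $i$ of $r$ in the enumeration of $\mathbb{Q}$ is computable from $(n,m)$, and the error terms are absorbed into the constant $4$ in the exponent. The remaining bookkeeping is routine: the liminf over all $\delta\to 0$ is at most the limit along the subsequence $\delta_n$, and $-\log_2\delta_n = (n+2)!-4 \sim (n+2)!$.
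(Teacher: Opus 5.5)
Your proof is correct and follows the paper's argument essentially step for step. At each stage $n=\seq{s,k}$ you use $(x\uhr (n+2)!, y\uhr (n+2)!)\in U_{n+1}$ to get a dyadic $r=m2^{-((n+1)!+3)}$ with $|m|\le 2^{(n+1)!+4}$ lying within $2^{-((n+2)!-4)}$ of $p\cdot\vec{e}_s$, bound $K(r)\in O((n+1)!)$ with no oracle, and conclude from $(n+1)!/((n+2)!-4)\to 0$ and the Point-to-Set Principle; you are only slightly more explicit than the paper about passing through $\iota^{-1}$ and the error arithmetic.
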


\begin{proof}
Fix a point $p \in A$. We will show that $\={cdim}(p\cdot \vec{e}_s) = 0$, and the claim will follow by the point-to-set principle.

Consider any $k$ with $n = \seq{s, k}$. By our action at stage $n$, there is some rational $r$ of the specified form with $d(p\cdot \vec{u}_q, r) < 2^{-((n+2)!-3)}$, and as previously noted, $|r| \le 2$. By the triangle inequality and our choice of $q$, $d(p\cdot \vec{e}_s, r) < 2^{-((n+2)!-4)}$.

Observe that $r$ is determined by $m$ and $n$, and $|m| \le 2^{(n+1)!+4}$. So $K(r) \in O((n+1)!)$. Thus
\begin{align*}
	\={cdim}(p\cdot \vec{e}_s) &= \liminf_{\delta \to 0} \frac{K_\delta(p\cdot \vec{e}_s)}{-\log_2\delta}\\
	&\le^\times \lim_{n \to \infty} \frac{(n+1)!}{(n+2)!-4} = 0.\qedhere
\end{align*}
\end{proof}

\begin{claim}\label{clm:full_axis_choices}
For every $(\sigma, \tau) \in V_n$, at least one of the following holds:
\begin{itemize}
\item For every $\sigma'$ extending $\sigma$ with $|\sigma'| = (n+2)!$, there is a $\tau'$ extending $\tau$ with $(\sigma', \tau') \in U_{n+1}$.
\item For every $\tau'$ extending $\tau$ with $|\tau'| = (n+2)!$, there is a $\sigma'$ extending $\sigma$ with $(\sigma', \tau') \in U_{n+1}$.
\end{itemize}
\end{claim}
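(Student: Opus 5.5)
Let $N=(n+1)!+1$ and $M=(n+2)!$, and fix $(\sigma,\tau)\in V_n$, so $\iota([\sigma]\times[\tau])$ is a square $Q$ of side $2^{-N}$. Each pair $(\sigma',\tau')$ of length $M$ extending $(\sigma,\tau)$ corresponds to a subsquare of $Q$ of side $2^{-M}$, with $2^{M-N}$ columns indexed by $\sigma'$ and rows indexed by $\tau'$. Write $\vec u_q=(c,s)$ with $c=\cos q\pi$, $s=\sin q\pi$. Projection onto $\vec u_q$ is $(x,y)\mapsto cx+sy$. By the first bullet in the definition of $U_{n+1}$, it is enough to produce a suitable subsquare whose projection lies in some $B(r,2^{-(M-2)})$ with $r\in 2^{-(N+2)}\mathbb{Z}$. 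A subsquare of side $2^{-M}$ projects to an interval of length $(|c|+|s|)2^{-M}\le\sqrt2\cdot 2^{-M}$. Hence it suffices that the projection of the subsquare's center lies within $2^{-(M-2)}-2^{-M}$ of a grid point $r$. Centers form a lattice with spacing $2^{-M}$ in each coordinate.

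The proof splits into two cases according to whether $|c|\ge|s|$ or $|s|\ge|c|$. If $|s|\ge |c|$, then $|s|\ge 1/\sqrt2$, and we aim for the first alternative. Fix any column $\sigma'$, i.e.\ a fixed $x$-coordinate $x_0$ of centers. As $\tau'$ ranges over all rows, the projected center $cx_0+sy$ moves in steps of $|s|2^{-M}\le 2^{-M}$. Its total range is $|s|(2^{-N}-2^{-M})\ge 2^{-N}/\sqrt2-2^{-M}$.

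The main obstacle is showing that this range contains, or comes within one step of, a grid point of spacing $2^{-(N+2)}$. The range length is at least $2^{-N}/\sqrt2-2^{-M}$, and $2^{-N}/\sqrt2 > 2^{-(N+2)}=2^{-N}/4$. The slack is huge since $M\gg N$ (for $n\ge1$; $n=0$ can be checked by hand, since $M-N=3$ there). So the closed interval of projected values spans more than one grid spacing and hence contains some $r$. Because consecutive projected centers differ by at most $2^{-M}$, some center projects within $2^{-M}/2$ of $r$. Then the whole subsquare's projection lies within $2^{-M}/2+\sqrt2\cdot 2^{-M}/2<2\cdot 2^{-M}<2^{-(M-2)}$ of $r$. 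Also $|r|\le 2$ automatically, so the corresponding $(\sigma',\tau)$ extension is in $U_{n+1}$, and this holds for every column $\sigma'$.

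The case $|c|\ge|s|$ is symmetric: fix a row $\tau'$ and vary $\sigma'$, which gives the second alternative. The only bookkeeping left is to check that the extensions produced genuinely extend an element of $V_n$ and have the right length, both of which are immediate. This is also where one sees that requiring only the weaker radius $2^{-(M-2)}$ in the membership criterion makes the claim go through without exact arithmetic.
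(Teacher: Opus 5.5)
Your argument is correct and is essentially the paper's: the same steep/shallow case split, fixing a column $\sigma'$ and moving along it to find a row whose subsquare projects near a grid point $r$. Your bound that the projected centers along a column span at least $2^{-N}/\sqrt2-2^{-M}>2^{-(N+2)}$ is the paper's bound on the vertical spacing of the level lines $\ell_r$, and your center-plus-half-width estimate replaces the paper's corner check. One slip: for $n=0$ you have $N=M=2$, so $M-N=0$, not $3$ (that is $n=1$); the case is still trivial, since the only extension of $(\sigma,\tau)$ is itself and a square of side $1/4$ projects well inside $B(r,2^{-(M-2)})=B(r,1)$ for the grid point $r$ nearest its center's projection.
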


\begin{proof}
The cases will depend on the angle of $\vec{u}_q$, for the $\vec{u}_q$ chosen in the construction of $U_{n+1}$. Without loss of generality, the smallest angle between $\vec{u}_q$ and the $x$-axis is at least $\pi/4$ (i.e., $\vec{u}_q$ is ``steep''). The argument for the other case will proceed the same, interchanging the roles of $\sigma$ and $\tau$.

Fix $\sigma'$ extending $\sigma$ with $|\sigma'| = (n+2)!$.  Note that $\iota([\sigma'] \times [\tau])$ is a rectangle with width $2^{-(n+2)!}$ and height $2^{-((n+1)!+1)}$.

For $r$ of the form $r = m2^{-((n+1)!+3)}$, let $\ell_r$ be the line $\{ p \in \mathbb{R}^2 : p\cdot \vec{u}_q = r\}$.  Note that our assumption of steepness gives us that this line has slope between $-1$ and $1$, and the slope does not depend on $r$. Without loss of generality, we will assume that the slope is nonnegative. The argument for a negative slope proceeds the same, except starting on the right edge of $\iota([\sigma']\times [\tau])$ instead of the left edge as we are about to do.

Note that any vertical line will intersect lines of the form $\ell_r$ with frequency at least $\sqrt{2}\cdot 2^{-((n+1)!+3)}$. That is, the vertical distance between successive such lines is at most this value (see \cref{fig:vert_dist}). Since $\sqrt{2}\cdot 2^{-((n+1)!+3)}$ is less than half the height of the rectangle $\iota([\sigma']\times[\tau])$, there is a line of the form $\ell_r$ which intersects $\iota([\sigma']\times[\tau])$ in the bottom half of the left edge. Fix such an $r$ and let $y_0$ be the $y$-value of this intersection, and let $\tau'$ be lexicographically least of length $(n+2)!$ such that $\min\iota([\tau']) \ge y_0$.

\begin{figure}
\begin{tikzpicture}
\draw[-] (0, 1) -- (-3, 7);
\draw[->] (0, 1) -- (-.5, 2) node[right] {$\vec{u}_q$};

\fill (-1,3) circle[radius=1.5pt] node[left, shift={(0,.1)}] {$r_0\vec{u}_q$};
\fill (-2, 5) circle[radius=1.5pt] node[left, shift={(0,.1)}] {$r_1\vec{u}_q$};

\draw[-] (1, 4) -- (-3, 2) node[below] {$\ell_{r_0}$};
\draw[-] (0, 6) -- (-4, 4) node[below] {$\ell_{r_1}$};

\node (D) at (-1.8, 5.1) {};
\node (E) at (-2,5) {};
\node (F) at (-1.9, 4.8) {};

\pic [draw, -, angle radius= 2mm] {right angle = D--E--F};

\draw[dashed] (-1,1) -- (-1,7);

\node (A) at (-1.5,4) {};
\node (B) at (-1,3) {};
\node (C) at (-1,4) {};

\pic [draw, -, "$\theta$", angle eccentricity=1.5] {angle = C--B--A};
\end{tikzpicture}
\caption{\label{fig:vert_dist} Here $r_0$ and $r_1$ are successive values of $r$, and so their difference is $2^{-((n+1)!+3)}$. Thus this is the distance between the points $r_0\vec{u}_q$ and $r_1\vec{u}_q$. By our assumptions, $-\pi/4 \le \theta \le 0$. Then the vertical distance between $\ell_{r_0}$ and $\ell_{r_1}$ is the hypotenuse of the triangle, which has length $2^{-((n+1)!+3)}\sec\theta \le \sqrt{2}\cdot 2^{-((n+1)!+3)}$.}
\end{figure}

We claim that this is our desired $\tau'$. Note that $\min \iota([\tau']) < y_0 + 2^{-(n+2)!}$, and so $\max \iota([\tau']) < y_0 + 2^{-(n+2)!+1} < y_0+2^{-((n+2)!-2)}$. Since $p \mapsto p\cdot \vec{u}_q$ is Lipschitz with constant 1, we have shown that for $p$ the bottom-left or top-left corner of $\iota([\sigma']\times[\tau'])$, $p\cdot \vec{u}_q \in B(r, 2^{-((n+2)!-2)})$.

\begin{figure}
\begin{tikzpicture}

\draw[-] (0,0) -- (0, 8);
\draw[-] (1, 0) -- (1, 8);

\draw[dashed] (-1, 2.5) -- (2,4) node[right] {$\ell_r$};
\draw[dotted] (-1, 4.5) -- (2,6) node[right] {$\ell_r + 2^{-((n+2)!-2)}$};
\draw[dotted] (-1, .5) -- (2,2) node[right] {$\ell_r - 2^{-((n+2)!-2)}$};

\node at (.5, -.5) {$\iota([\sigma']\times[\tau])$};

\draw[-,gray] (0,3.25) -- (1,3.25) -- (1,4.25) -- (0,4.25) -- (0, 3.25);
\node (A) at (-3,3) {$\iota([\sigma']\times[\tau'])$};
\draw[->] (A) to [bend left] (-.1,3.75);

\end{tikzpicture}
\caption{\label{fig:square_fits} Here we see part of $\iota([\sigma']\times[\tau])$. Since the rectangle is so much taller than it is wide, neither the top or bottom edge appear in the figure. The square $\iota([\sigma']\times[\tau'])$ fits within $2^{-((n+2)!-2)}$ of $\ell_r$.}
\end{figure}

Let $y_1$ be the $y$-value of the intersection of $\ell_r$ with the right edge of $ \iota([\sigma']\times[\tau'])$. By our assumptions about the slope of $\ell_r$, we have $y_0 \le y_1 \le y_0 + 2^{-(n+2)!}$. Thus $y_1 - 2^{-(n+2)!} \le \min \iota([\tau']) \le \max \iota([\tau']) < y_1+2^{-((n+2)!-2)}$ (see \cref{fig:square_fits}). We have thus shown that for $p$ the bottom-right or top-right corner of $\iota([\sigma']\times[\tau'])$, $p\cdot \vec{u}_q \in B(r, 2^{-((n+2)!-2)})$. By convexity, $\iota([\sigma']\times[\tau']) \subseteq B(r, 2^{-((n+2)!-2)})$, and so $(\sigma', \tau') \in U_{n+1}$.
\end{proof}

\begin{claim}
For any oracle $D$, there is a point $p \in A$ with $\={cdim}^D(p) \ge 1$.
\end{claim}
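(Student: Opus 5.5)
We build $p=(x,y)\in A$ by a forcing-style construction, choosing at each block one free coordinate to be $D$-random. Fix a sequence $R\in 2^\omega$ that is Martin-Löf random relative to $D$ together with $Z$ and $\bar e$; it is enough that $R$ is random relative to $D\oplus Z\oplus\bar e$. We define $(\sigma_n,\tau_n)\in U_n$ and $(\sigma_n^+,\tau_n^+)\in V_n$ by recursion, with each element extending the previous one coordinatewise.

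\emph{Step from $U_n$ to $V_n$.} There is no choice here: each element of $U_n$ has a unique extension in $V_n$.

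\emph{Step from $V_n$ to $U_{n+1}$.} Apply \cref{clm:full_axis_choices} to $(\sigma_n^+,\tau_n^+)$. Say the first alternative holds; the second is symmetric with the roles of the coordinates swapped. Then the new $\sigma_{n+1}$ may be \emph{any} extension of $\sigma_n^+$ of length $(n+2)!$. We take it to be $\sigma_n^+$ followed by the next $(n+2)!-(n+1)!-1$ unused bits of $R$. We then let $\tau_{n+1}$ be the least (in some fixed order) $\tau'$ with $(\sigma_{n+1},\tau')\in U_{n+1}$. The sets $U_{n+1}$ are computable from $V_n$ and $q$, hence from $Z\oplus\bar e$. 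So the whole sequence $(x\uhr (n+1)!, y\uhr (n+1)!)_n$ is computable from $R\oplus Z\oplus\bar e$, together with the record of which alternative was used at each stage.

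\emph{Complexity estimate.} We compare with the total length $N=(n+2)!$. On the interval of positions $((n+1)!+1,(n+2)!]$, one of $x,y$ copies a fresh segment of $R$ of length $L_n=(n+2)!-(n+1)!-1$. Let $M$ be the total number of bits of $R$ used up through stage $n+1$. Given $D$ and the pair $(x\uhr N, y\uhr N)$, we can read off $R\uhr M$ by concatenating the random-copied segments from the appropriate coordinate at each stage. For this we need the sequence of alternatives, which costs at most $n+1$ bits, and $N$, which costs $O(\log N)$.

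We must, however, work with oracle $D$ alone, not with $D\oplus Z\oplus\bar e$. This is fine, because extracting $R$ from the pair needs only the list of choices, not $Z$ or $\bar e$. Indeed $K^{D}(R\uhr M)\le^+ K^D(x\uhr N,y\uhr N)+n+O(\log N)$. Since $R$ is random relative to $D$, $K^D(R\uhr M)\ge M$. Since $M\ge N-\sum_{j\le n}(j+1)!-n\ge N(1-o(1))$, the ratio is at least $1-o(1)$ along $N=(n+2)!$.

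\emph{Intermediate lengths (the main obstacle).} The liminf ranges over all lengths $N'$, not only over $N'=(n+2)!$. For $(n+1)!<N'\le(n+2)!$, the segment of $R$ copied so far during the current block is still readable from $(x\uhr N', y\uhr N')$. All earlier blocks together total about $(n+1)!$ bits, which is $o(N')$ unless $N'$ is close to $(n+1)!$, and in that regime the previous block itself gives about $N'$ bits. So in all cases the recoverable prefix of $R$ has length at least $N'-O((n+1)!/n)-O(n)$, which is $N'(1-o(1))$. The same argument as above then gives $K^D(x\uhr N',y\uhr N')\ge N'(1-o(1))$. Hence $\={cdim}^D(p)\ge1$ with $p\in A$, as claimed, and by the Point-to-Set Principle $\={dim}_H(A)\ge 1$.

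The delicate step is exactly this bookkeeping: the factorial growth is what makes $\sum_{j\le n}(j+1)!=o((n+2)!)$ and makes the overhead from the alternation bits and from the non-free coordinate negligible.
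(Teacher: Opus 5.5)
Your argument is correct, but it reaches the bound by a different route from the paper. The paper works stage by stage with finite strings. At stage $n$ it picks $\rho$ random without deficiency relative to $D$ and $(\hat\sigma,\hat\tau,K^D(\hat\sigma,\hat\tau))$. It then uses symmetry of information to get $K^D(\sigma_{n+1}\uhr i,\tau_{n+1}\uhr i)\ge^+ K^D(\hat\sigma,\hat\tau)+t$, accumulating one constant per stage to reach $K^D(x\uhr i,y\uhr i)\ge i-cn_i$. You instead fix one $D$-random sequence $R$ in advance and copy consecutive blocks of it into whichever coordinate \cref{clm:full_axis_choices} leaves free. You then argue by decoding: $(x\uhr N',y\uhr N')$ together with the record of alternatives computes a long prefix of $R$, and Levin--Schnorr bounds the complexity of that prefix from below. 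This avoids conditional complexity and symmetry of information entirely, and gives the estimate at every length in one step. The price is invoking a relatively Martin-L\"of random sequence and the Levin--Schnorr theorem. The paper needs only a counting argument for finite strings, at the cost of the more delicate symmetry-of-information bookkeeping.

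Some details should be tidied. Randomness of $R$ relative to $D$ alone is all you use. Relativizing to $Z\oplus\bar e$, and the remark that the construction is computable from $R\oplus Z\oplus\bar e$, play no role. Levin--Schnorr gives $K^D(R\uhr M)\ge M-O(1)$ rather than $\ge M$. Also, $N$ need not be encoded, since it is the length of the pair.

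More importantly, your bookkeeping is far more pessimistic than necessary, and this is why intermediate lengths looked like an obstacle. Every position below $N'$ other than position $0$ and the coding positions $(j+1)!$ carries a bit of $R$ in the coordinate chosen for its block. All earlier blocks remain readable from $(x\uhr N',y\uhr N')$. So for $(n+1)!<N'\le(n+2)!$, given the $n+1$ alternatives, you recover exactly $R\uhr(N'-n-2)$. In particular $M=N-n-2$ at $N=(n+2)!$, by telescoping. This gives $K^D(x\uhr N',y\uhr N')\ge N'-2n-O(\log n)$ directly, with no case distinction and no $O((n+1)!/n)$ term.
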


\begin{proof}
Fix $D$. We construct $p$ in stages, making coherent choices of $(\sigma_n,\tau_n) \in U_n$ for each $n$. We begin with $(\sigma_0, \tau_0) \in U_0$ arbitrary.

Given $(\sigma_n, \tau_n) \in U_n$, let $(\hat{\sigma}, \hat{\tau})$ be the unique element of $V_n$ which extends $(\sigma_n, \tau_n)$ coordinatewise. Fix $\rho$ of length $(n+2)!-(n+1)!-1$ which is random without deficiency relative to $D$ and $(\hat{\sigma}, \hat{\tau}, K^D(\hat \sigma,\hat \tau))$. That is, for all $t \le (n+2)!-(n+1)!-1$, 
 \[
 K^{D}( (\rho\uhr t) |  (\hat{\sigma}, \hat{\tau}, K^D(\hat \sigma,\hat \tau)) \ge t,
 \]
where $K(a|b)$ is the conditional complexity of~$a$ given~$b$ (we could alternatively replace the oracle~$D$ by $D\oplus (\hat\sigma, \hat\tau, K^D(\hat\sigma,\hat\tau))$). Such $\rho$ exists by a measure counting argument. By Claim \ref{clm:full_axis_choices}, there is a $\nu$ with $(\hat{\sigma}\cat\rho, \hat{\tau}\cat\nu) \in U_{n+1}$ or $(\hat{\sigma}\cat\nu, \hat{\tau}\cat\rho) \in U_{n+1}$. We let $(\sigma_{n+1}, \tau_{n+1})$ be one of these. 

Fix $i$ with $(n+1)!+1\le i < (n+2)!$, and let $t= i - (n+1)! - 1$. Let $\nu_0$ and~$\nu_1$ be the strings (of length~$t$) such that $\sigma_{n+1}\uhr{i} = \hat\sigma\cat \nu_0$ and $\tau_{n+1}\uhr{i} = \hat\tau\cat \nu_1$. One of $\nu_0$ or $\nu_1$ is an initial segment of the $\rho$ that we chose. Now by symmetry of information (see for example \cite[Theorem\:3.9.1]{Li.Vitanyi:93}), 
\begin{align*}
	&\hskip -1 pc K^D(\sigma_{n+1}\uhr{i}, \tau_{n+1}\uhr{i})\\
	&=^+ K^D(\hat{\tau}, \hat{\sigma}) + K^D ((\nu_0, \nu_1) \,|\, \hat{\sigma},\hat{\tau}, K^D(\hat\sigma,\hat\tau)) 	\\
	&\ge^+ K^D(\hat{\tau},\hat{\sigma}) + K^D ((\rho\uhr{t}) \,|\, \hat{\sigma},\hat{\tau}, K^D(\hat\sigma,\hat\tau))
   \\
	&\ge K^D(\hat{\tau},\hat{\sigma}) + t.
\end{align*}

We let $x = \bigcup_n \sigma_n$ and $y = \bigcup_n \tau_n$ and $p = (x, y)$. Then $p\in A$; it remains to calculate its dimension.

By the above calculation, for any $i$, $K^D(x\uhr i, y\uhr i) \ge i - cn_i$, where $n_i$ is the largest $n$ with $(n+1)!+1 \le i$, and $c$ is a constant incorporating the constant for symmetry of information, the constant of the projection used in the first inequality above, and a constant accounting for the extra two bits when passing from $(\sigma_n, \tau_n)$ to $(\hat{\sigma}, \hat{\tau})$. Note that $n_i \in o(i)$.

Thus
\[
	\={cdim}^D(p) = \liminf_{i \to \infty} \frac{K^D(x\uhr i, y\uhr i)}{i} \ge \lim_{i \to \infty} \frac{i - cn_i}{i} = 1.\qedhere
\]
\end{proof}

It follows from the Point-to-Set Principle that $A$ has Hausdorff dimension at least 1.
\end{proof}

\bibliographystyle{plain}

\bibliography{unusual}

\end{document}